\newtheorem{theorem}{Theorem}[section]
\theoremstyle{plain}
\newtheorem{lemma}[theorem]{Lemma}
\newtheorem{proposition}[theorem]{Proposition}
\numberwithin{equation}{section}
\begin{document}
\title[Magnetic nonlinear Choquard equation ]{Multiple solutions to a magnetic nonlinear Choquard equation}
\author{Silvia Cingolani}
\address{Dipartimento di Matematica, Politecnico di Bari, via Orabona 4, 70125 Bari, Italy.}
\email{s.cingolani@poliba.it}
\author{M\'{o}nica Clapp}
\address{Instituto de Matem\'{a}ticas, Universidad Nacional Aut\'{o}noma de M\'{e}xico,
Circuito Exterior, C.U., 04510 M\'{e}xico D.F., Mexico.}
\email{mclapp@matem.unam.mx}
\author{Simone Secchi}
\address{Dipartimento di Matematica ed Applicazioni, Universit\`{a} di Milano-Bicocca,
via Cozzi 53, 20125 Milano, Italy.}
\email{Simone.Secchi@unimib.it}
\thanks{S. Cingolani is supported by the MIUR proyect \emph{Variational and
topological methods in the study of nonlinear phenomena} (PRIN 2007).}
\thanks{M. Clapp is supported by CONACYT grant 129847 and PAPIIT grant IN101209 (Mexico).}

\begin{abstract}
We consider the stationary nonlinear magnetic Choquard equation
\[
(-\mathrm{i}\nabla+A(x))^{2}u+V(x)u=\left(  \frac{1}{|x|^{\alpha}}\ast
|u|^{p}\right)  |u|^{p-2}u,\quad x\in\mathbb{R}^{N}%
\]
where $A\ $is a real valued vector potential, $V$ is a real valued scalar
potential$,$ $N\geq3$, $\alpha\in(0,N)$ and $2-\left(  \alpha/N\right)
<p<(2N-\alpha)/(N-2)$. \ We assume that both $A$ and $V$ are compatible with
the action of some group $G$ of linear isometries of $\mathbb{R}^{N}$. We
establish the existence of multiple complex valued solutions to this equation
which satisfy the symmetry condition
\[
u(gx)=\tau(g)u(x)\text{ \ \ \ for all }g\in G,\text{ }x\in\mathbb{R}^{N},
\]
where $\tau:G\rightarrow\mathbb{S}^{1}$ is a given group homomorphism into the
unit complex numbers.\medskip

\noindent\textsc{MSC2010: }35Q55, 35Q40, 35J20, 35B06.

\noindent\noindent\textsc{Keywords: }Nonlinear Choquard equation, nonlocal
nonlinearity, electromagnetic potential, multiple solutions, intertwining
solutions.\medskip

\end{abstract}
\maketitle

\section{Introduction and statement of results}

We consider the stationary nonlinear magnetic Choquard problem
\begin{equation}%
\begin{cases}
(-\mathrm{i}\nabla+A(x))^{2}u+V(x)u=\left(  \frac{1}{|x|^{\alpha}}\ast
|u|^{p}\right)  |u|^{p-2}u,\\
u\in L^{2}(\mathbb{R}^{N},\mathbb{C}),\\
\nabla u+\mathrm{i}A(x)u\in L^{2}(\mathbb{R}^{N},\mathbb{C}^{N}),
\end{cases}
\label{prob}%
\end{equation}
where $A:\mathbb{R}^{N}\rightarrow\mathbb{R}^{N}$ is a $C^{1}$-vector
potential, $V:\mathbb{R}^{N}\rightarrow\mathbb{R}$ is a bounded continuous
scalar potential with $\inf_{\mathbb{R}^{N}}V>0$, $N\geq3,$ $\alpha\in(0,N)$
and $p\in(2-\frac{\alpha}{N},\frac{2N-\alpha}{N-2})$.

The special case
\begin{equation}
-\Delta u+u=\left(  \frac{1}{|x|}\ast|u|^{2}\right)  u,\text{\hspace{0.3in}%
}u\in H^{1}(\mathbb{R}^{3}), \label{ch}%
\end{equation}
is commonly referred to as the stationary \emph{Choquard equation}. It arises
in an approximation to Hartree-Fock theory for a one component plasma, and has
many interesting applications in the quantum theory of large systems of
non-relativistic bosonic atoms and molecules, see e.g. \cite{fl,gv,ls} and the
references therein. In his 1977 paper \cite{lieb} Lieb proved the existence
and uniqueness, up to translations, of the ground state to equation
(\ref{ch}). Later, in \cite{l2}, Lions showed the existence of a sequence of
radially symmetric solutions to this equation.

Equation (\ref{ch}) was also introduced by Penrose in his discussion on the
self-gravitational collapse of a quantum mechanical wave-function
\cite{pe1,pe2,pe3}. In this context it is usually called the
\emph{Schr\"{o}dinger-Newton equation}. Penrose suggested that the solutions
of (\ref{ch}), up to reparametrization, are the basic stationary states which
do not spontaneously collapse any further, within a certain time scale. It is
therefore of interest to investigate these basic solutions, as has been done
e.g. in \cite{mpt,mt,t}.

The eigenvalue problem associated to problems similar to (\ref{prob}) with
$N=3$ and $p=2$ has been investigated by several authors, both in the magnetic
and nonmagnetic case, see e.g. \cite{el, fl, em} and the references therein.
In \cite{a} Ackermann considered periodic potentials $V$ and proved the
existence of infinitely many solutions to problem (\ref{prob}) for $A=0$. A
problem similar to (\ref{ch}) involving a nonautonomous nonlocal term was
studied in \cite{zkhx}.

Recently, Ma and Zhao \cite{mz} studied the generalized stationary nonlinear
Choquard problem
\begin{equation}%
\begin{cases}
-\Delta u+u=\left(  \frac{1}{|x|^{\alpha}}\ast|u|^{p}\right)  |u|^{p-2}u,\\
u\in H^{1}(\mathbb{R}^{N}).
\end{cases}
\label{gch}%
\end{equation}
Under some assumptions on $\alpha,$ $p$ and $N,$ which include the classical
case, they showed that every positive solution to (\ref{gch}) is radially
symmetric and monotone decreasing about some point. Using this fact, they
proved that the positive solution to the Choquard equation (\ref{ch}) -and not
only the ground state- is unique up to translations. Uniqueness of positive
solutions in dimensions greater than $3$ is still an open question.

Semiclassical solutions to problem (\ref{prob}) for $N=3$, $\alpha=1$, $p=2$,
have been recently obtained in \cite{n,s,ww} when $A=0$, and in \cite{ccs,css}%
\ when $A\neq0$.

In this paper we consider the case where both the vector and the scalar
potential have some symmetries. To be precise, we consider a closed subgroup
$G$ of the group $O(N)$ of linear isometries of $\mathbb{R}^{N}\ $and assume
that $A$ and $V$ satisfy%
\begin{equation}
A(gx)=gA(x)\text{ \ \ and \ \ }V(gx)=V(x)\quad\text{for all $g\in G$,
$x\in\mathbb{R}^{N}$}. \label{Gassum}%
\end{equation}
We look for solutions to problem (\ref{prob}) which satisfy%
\begin{equation}
u(gx)=\tau(g)u(x)\text{ \ \ \ for all }g\in G,\text{ }x\in\mathbb{R}^{N},
\label{tau-inv}%
\end{equation}
where $\tau:G\rightarrow\mathbb{S}^{1}$ is a given continuous group
homomorphism into the unit complex numbers $\mathbb{S}^{1}.$ Solutions with
this property are called \emph{intertwining}. Condition (\ref{tau-inv})
implies that the absolute value $\left\vert u\right\vert $ of $u$ is
$G$-invariant, i.e.%
\[
\left\vert u(gx)\right\vert =\left\vert u(x)\right\vert \text{ \ \ \ for all
}g\in G,\text{ }x\in\mathbb{R}^{N},
\]
whereas the phase of $u(gx)$ is that of $u(x)$ multiplied by $\tau(g).$

It might happen that every function satisfying (\ref{tau-inv}) is trivial. For
example, if $G=O(N)$ and $\tau(g)$ is the determinant of $g$ then for each
$x\in\mathbb{R}^{N}$ we may choose a $g_{x}\in O(N)$ with $g_{x}x=x$ and
$\tau(g_{x})=-1.$ If $u:\mathbb{R}^{N}\rightarrow\mathbb{C}$ satisfies
(\ref{tau-inv}) then $u(x)=u(g_{x}x)=-u(x).$ Hence $u=0.$ To avoid this
behavior we introduce assumption $(H_{0})$ below.

First we introduce some notation. For $x\in\mathbb{R}^{N}$ we denote by
$Gx:=\{gx:g\in G\}$ the $G$-orbit of $x$ and by $G_{x}:=\{g\in G:gx=x\}$ its
isotropy group. We write $\#Gx$ for the cardinality of $Gx$ and $\ker\tau$ for
the kernel of $\tau.$ Set
\begin{align*}
\delta_{G}(x)  &  :=\left\{
\begin{array}
[c]{ll}%
\inf\{\left\vert gx-hx\right\vert :g,h\in G,\text{ }gx\neq hx\} & \text{if
}\#Gx>1,\\
2\left\vert x\right\vert  & \text{if }\#Gx=1,
\end{array}
\right. \\
\Sigma_{\tau}  &  :=\{x\in\mathbb{R}^{N}:\left\vert x\right\vert =1,\text{
}\#Gx=\min_{y\in\mathbb{R}^{N}\smallsetminus\{0\}}\#Gy,\text{ }G_{x}%
\subset\ker\tau\},\\
\delta_{\tau}  &  :=\frac{1}{2}\max_{x\in\Sigma_{\tau}}\delta_{G}(x).
\end{align*}
Then, $\delta_{\tau}\in\lbrack0,1].$ Note that $\delta_{G}(x)=0$ if
$\#Gx=\infty.$ Hence, if $\delta_{\tau}>0$, some $G$-orbit in $\Sigma_{\tau}$
must be finite.

Set%
\[
\Lambda_{\alpha,p}:=\left[  2,\frac{2N}{N-2}\right]  \cap\left(  p,\frac
{pN}{N-\alpha}\right)  \cap\left(  \frac{(2p-2)N}{N+2-\alpha},\frac
{(2p-1)N}{N+2-\alpha}\right]  \cap\left[  \frac{(2p-1)N}{2N-\alpha}%
,\infty\right)  .
\]
We prove the following results.

\begin{theorem}
\label{mainthm1}Assume that $\#Gx=\infty$ for every $x\in\mathbb{R}%
^{N}\smallsetminus\{0\}$\ and that the following holds:\newline$(H_{0})$ There
exists $x\in\mathbb{R}^{N}$ such that $G_{x}\subset\ker\tau.$\newline Then
problem \emph{(\ref{prob})} has a sequence $(u_{n})$ of nontrivial solutions
which satisfy \emph{(\ref{tau-inv})} and%
\begin{equation}
\lim_{n\rightarrow\infty}\int_{\mathbb{R}^{N}}\left(  \left\vert \nabla
u_{n}+\mathrm{i}Au_{n}\right\vert ^{2}+\left\vert u_{n}\right\vert
^{2}\right)  =\infty. \label{unbdd}%
\end{equation}

\end{theorem}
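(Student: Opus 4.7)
My plan is an equivariant variational approach. The natural functional space is the magnetic Sobolev space
\[
H_A:=\{u\in L^2(\mathbb{R}^N,\mathbb{C}):\nabla u+\mathrm{i}Au\in L^2(\mathbb{R}^N,\mathbb{C}^N)\},
\]
equipped with the Hilbert norm $\|u\|_A^2=\int_{\mathbb{R}^N}(|\nabla u+\mathrm{i}Au|^2+V|u|^2)$. The energy
\[
J(u)=\tfrac12\|u\|_A^2-\tfrac{1}{2p}\iint_{\mathbb{R}^N\times\mathbb{R}^N}\frac{|u(x)|^p|u(y)|^p}{|x-y|^\alpha}\,dx\,dy
\]
is of class $C^1$ on $H_A$: the diamagnetic inequality $|\nabla|u||\le|\nabla u+\mathrm{i}Au|$ makes $|u|\in H^1(\mathbb{R}^N)$, and the assumed range $p\in(2-\alpha/N,(2N-\alpha)/(N-2))$ is exactly the Hardy--Littlewood--Sobolev condition ensuring $|u|^p\in L^{2N/(2N-\alpha)}(\mathbb{R}^N)$.

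By assumption \eqref{Gassum} the formula $(g\cdot u)(x):=\tau(g)u(g^{-1}x)$ defines a unitary linear action of $G$ on $H_A$ leaving $J$ invariant; its fixed-point subspace $H_A^\tau$ consists precisely of the functions satisfying \eqref{tau-inv}, and Palais's principle of symmetric criticality implies that critical points of $J|_{H_A^\tau}$ are weak solutions of \eqref{prob}. Hypothesis $(H_0)$ is used exactly to guarantee that $H_A^\tau$ is nontrivial and in fact infinite-dimensional: near a point $x_0$ with $G_{x_0}\subset\ker\tau$, a slice chart and $\tau$-twisted averaging of a bump function against cosets of $G_{x_0}$ in $G$ produces compactly supported intertwining functions, and varying the bump yields infinitely many linearly independent directions.

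The decisive ingredient, which I expect to be the main obstacle, is compactness. Since $|u|$ is $G$-invariant for every $u\in H_A^\tau$, the diamagnetic inequality embeds $H_A^\tau$ continuously into the $G$-invariant part $H_G^1$ of $H^1(\mathbb{R}^N,\mathbb{R})$. Under the standing assumption that every nonzero $G$-orbit is infinite, a Lions-type lemma for symmetric Sobolev spaces provides the compact embedding $H_G^1\hookrightarrow L^q(\mathbb{R}^N)$ for every $q\in(2,2N/(N-2))$; combining this with a Brezis--Lieb argument on the phase then transfers compactness to $H_A^\tau\hookrightarrow L^q(\mathbb{R}^N,\mathbb{C})$ on the same range. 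Because $2Np/(2N-\alpha)$ lies strictly between $2$ and the critical Sobolev exponent by the hypothesis on $p$, the nonlocal Choquard term is weakly sequentially continuous along bounded sequences in $H_A^\tau$; together with the superquadratic structure ($2p>2$) this yields the Palais--Smale condition for $J|_{H_A^\tau}$ at every positive level, which is the key technical step: merging the magnetic gradient, the merely $G$-invariant (non-radial) geometry of orbits, and the nonlocality of the Choquard interaction in a single compactness statement.

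With (PS) available, the restricted functional $J|_{H_A^\tau}$ is additionally invariant under the free $\mathbb{S}^1$-action $u\mapsto e^{\mathrm{i}\theta}u$. I would then apply a standard symmetric minimax scheme, for instance the fountain theorem combined with the Fadell--Rabinowitz $\mathbb{S}^1$-cohomological index on an increasing sequence of finite-dimensional subspaces of $H_A^\tau$, to produce critical points $u_n$ with critical values $c_n\to+\infty$. Since every critical point satisfies $J(u)=\tfrac{p-1}{2p}\|u\|_A^2$ by pairing the equation with $u$, the divergence $c_n\to\infty$ is equivalent to \eqref{unbdd}, completing the argument.
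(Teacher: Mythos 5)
Your proposal reaches the right conclusion and shares the paper's overall architecture (symmetric criticality, the free $\mathbb{S}^{1}$-action by phase multiplication, a symmetric mountain pass/fountain scheme, and the use of $(H_{0})$ together with a slice argument to show $H_{A}^{1}(\mathbb{R}^{N},\mathbb{C})^{\tau}$ is infinite dimensional), but it takes a genuinely different route at the decisive compactness step. You obtain the Palais--Smale condition from a compact embedding $H^{1}_{G}(\mathbb{R}^{N})\hookrightarrow L^{q}(\mathbb{R}^{N})$ for $2<q<2N/(N-2)$, transferred to $H_{A}^{1}(\mathbb{R}^{N},\mathbb{C})^{\tau}$ via the diamagnetic inequality (tightness of $|u_{n}|$ plus local Rellich), and then from weak continuity of the Choquard term. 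The paper instead proves a quantitative statement (Proposition \ref{propPS}): $(PS)_{c}$ holds for all $c<(\min_{x\neq 0}\#Gx)E_{V_{\infty}}$, established by a concentration/splitting argument for the nonlocal term (Lemmas \ref{lemG}, \ref{lemexterior}, \ref{lemPSto0}, \ref{lemnils}) and specialized to all levels when every nonzero orbit is infinite; this stronger version is needed anyway for Theorem \ref{mainthm2}, so Theorem \ref{mainthm1} comes as a by-product. Your shortcut is legitimate for Theorem \ref{mainthm1} alone, but be aware that the phrase ``a Lions-type lemma provides the compact embedding'' conceals the one genuinely delicate point: for an arbitrary closed subgroup $G$ with all nonzero orbits infinite one must show that, as $|y|\to\infty$, the orbit $Gy$ contains arbitrarily many points with pairwise distances bounded below (a compactness argument on the unit sphere using continuity of $y\mapsto Gy$ in the Hausdorff metric, or the case analysis of the paper's Lemma \ref{lemG}); without that verification the compact embedding is not an off-the-shelf citation. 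A second, minor under-justified point is the well-definedness of your $\tau$-twisted bump: on the slice at $x_{0}$ you must take the bump $G_{x_{0}}$-invariant (which is where $G_{x_{0}}\subset\ker\tau$ enters), exactly as the paper does over the principal orbit type.
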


\begin{theorem}
\label{mainthm2}Assume $\Sigma_{\tau}\neq\emptyset$ and the following
hold:\newline$(H_{1})$ $\ p\in\lbrack2,(2N-\alpha)/(N-2))$ and $\Lambda
_{\alpha,p}\neq\emptyset.$\newline$(H_{2})$ $\ \lim_{\left\vert x\right\vert
\rightarrow\infty}V(x)=V_{\infty}$ and there exist $c_{0}>0,$ $\varrho>0$ and
$\kappa\in(0,2\delta_{\tau}\sqrt{V_{\infty}})$ such that%
\[
\left\vert A(x)\right\vert ^{2}+V(x)\leq V_{\infty}-c_{0}e^{-\kappa\left\vert
x\right\vert }\text{ \ \ \ for all }x\in\mathbb{R}^{N}\text{ with }\left\vert
x\right\vert \geq\varrho.
\]
Then problem \emph{(\ref{prob})} has at least one nontrivial solution which
satisfies \emph{(\ref{tau-inv})}.
\end{theorem}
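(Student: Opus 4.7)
The plan is to handle Theorem \ref{mainthm2} by a variational approach on the $\tau$-equivariant subspace of the magnetic Sobolev space, reducing the existence question to minimization on a Nehari manifold, and recovering the compactness needed by proving a strict energy threshold that rules out escape to infinity.

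I would first introduce the complex Hilbert space $H^1_A$ of $L^2$-functions with $\nabla u+\mathrm{i}Au\in L^2$, endowed with the inner product induced by $\int_{\mathbb{R}^N}(|\nabla u+\mathrm{i}Au|^2+V|u|^2)$. Under $(H_1)$, Hardy--Littlewood--Sobolev combined with the diamagnetic inequality and Sobolev embedding guarantees that the nonlocal term $D(u):=\iint|x-y|^{-\alpha}|u(x)|^p|u(y)|^p\,dx\,dy$ is well defined and of class $C^1$, so the energy $J(u):=\tfrac12\|u\|^2-\tfrac{1}{2p}D(u)$ is $C^1$ on $H^1_A$ and its critical points solve (\ref{prob}). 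The unitary action $(g\cdot u)(x):=\tau(g)u(g^{-1}x)$ leaves $J$ invariant thanks to (\ref{Gassum}), so the principle of symmetric criticality reduces the search to the closed subspace $H^1_\tau\subset H^1_A$ of functions fulfilling (\ref{tau-inv}); this subspace is nontrivial because $\Sigma_\tau\neq\emptyset$.

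Working on the $\tau$-equivariant Nehari manifold $\mathcal{N}_\tau=\{u\in H^1_\tau\smallsetminus\{0\}:\langle J'(u),u\rangle=0\}$, I set $c_\tau:=\inf_{\mathcal{N}_\tau}J$. A splitting lemma for Palais--Smale sequences of $J$ on $H^1_\tau$, adapted to the Riesz convolution via a Brezis--Lieb type identity, would show that any loss of compactness at level $c$ must produce translated ground states of the problem at infinity
\[
-\Delta w+V_\infty w=\bigl(|x|^{-\alpha}*|w|^p\bigr)|w|^{p-2}w,\qquad w\in H^1(\mathbb{R}^N,\mathbb{C}),
\]
with energy $c_\infty$ and a positive radial ground state $\omega$ that decays essentially like $e^{-\sqrt{V_\infty}|x|}$. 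Because $\tau$-equivariance forces any escaping bump to appear simultaneously along its entire $G$-orbit, and the smallest such orbit outside the origin has cardinality $k:=\min_{y\neq 0}\#Gy$, minimizing sequences on $\mathcal{N}_\tau$ are relatively compact as soon as $c_\tau<kc_\infty$, and their limit provides the desired solution. The heart of the argument is therefore to establish this strict inequality. I would pick $\xi\in\Sigma_\tau$ realizing $\delta_G(\xi)=2\delta_\tau$, choose coset representatives $g_1,\dots,g_k$ of $G/G_\xi$, and test $J$ on the $\tau$-equivariant ansatz
\[
u_t(x):=\sum_{j=1}^{k}\overline{\tau(g_j)}\,\omega(x-tg_j\xi),\qquad t\gg 1,
\]
projected onto $\mathcal{N}_\tau$. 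Since each bump is essentially a unit complex multiple of a real function, the quadratic form splits into $k$ copies of the limit-problem energy of $\omega$ plus a correction from $|A|^2+V-V_\infty$ and cross terms between distinct bumps. By $(H_2)$ the correction contributes at most $-c\,e^{-\kappa t}$ for some $c>0$, while the exponential decay of $\omega$ and the mutual separation $\geq 2t\delta_\tau$ of the centers bound the cross quadratic and nonlocal interactions by $Ce^{-2t\delta_\tau\sqrt{V_\infty}}$; the hypothesis $\kappa<2\delta_\tau\sqrt{V_\infty}$ in $(H_2)$ then makes the correction dominate, so $J(u_t)<kc_\infty$ for $t$ large, and $c_\tau<kc_\infty$ follows.

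The main obstacle in this plan will be the last estimate: it relies on sharp pointwise exponential decay bounds for the Choquard ground state $\omega$ (which is subtler than for local equations because of the slowly decaying convolution nonlinearity) and on careful control of cross contributions in the Riesz double integral of widely separated bumps, whose integrability range is exactly the window encoded by $\Lambda_{\alpha,p}$ in $(H_1)$. Once the strict threshold $c_\tau<kc_\infty$ has been established, the splitting/compactness argument produces a minimizer of $J$ on $\mathcal{N}_\tau$, which by symmetric criticality is a nontrivial solution of (\ref{prob}) satisfying (\ref{tau-inv}).
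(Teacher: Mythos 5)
Your overall strategy coincides with the paper's: reduce to the $\tau$-equivariant space by symmetric criticality, establish compactness below the threshold $kE_{V_{\infty}}$ with $k=\min_{y\neq 0}\#Gy$ (this is Proposition \ref{propPS}, proved via Lemmas \ref{lemG}--\ref{lemnils}), and beat that threshold by testing with a $\tau$-equivariant superposition of translates of the limit ground state along a minimal $G$-orbit. The one step that is wrong as written is your bound $Ce^{-2t\delta_{\tau}\sqrt{V_{\infty}}}$ on the cross \emph{nonlocal} interactions. The Riesz kernel is long-range: for bumps centered at $tg_{i}\xi$ and $tg_{j}\xi$ with separation at least $2\delta_{\tau}t$, the off-diagonal contribution
\[
\int_{\mathbb{R}^{N}}\int_{\mathbb{R}^{N}}\frac{\omega(x-tg_{i}\xi)^{p}\,\omega(y-tg_{j}\xi)^{p}}{|x-y|^{\alpha}}\,dx\,dy
\]
decays only like $t^{-\alpha}$, i.e.\ polynomially, and for large $t$ is vastly larger than the gain $e^{-\kappa t}$ coming from $(H_{2})$. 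Only the quadratic cross terms and the error in replacing $|u_{t}|^{p}$ by $\sum_{j}\omega(\cdot-tg_{j}\xi)^{p}$ are exponentially small. Your argument can be saved, but only by a sign you never invoke: these off-diagonal Riesz terms are positive and increase $\mathbb{D}(u_{t})$, hence decrease the Nehari quotient $\|u_{t}\|^{2}/\mathbb{D}(u_{t})^{1/p}$ and push $\max_{s\geq 0}J(su_{t})$ further \emph{below} $kE_{V_{\infty}}$. Without making this explicit, the claimed estimate is false and the proof does not close. A smaller caveat: for $p=2$ the ground state is only known to decay like $e^{-\sqrt{\mu}|x|}$ for each $\mu<V_{\infty}$ (Proposition \ref{aa}), so all rates must be written with $\sqrt{\mu}$, $\mu$ chosen close enough to $V_{\infty}$ that $\kappa<2\delta_{\tau}\sqrt{\mu}$; the strictness of $(H_{2})$ permits this.

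The paper sidesteps the cross-term problem entirely by truncating the ground state to a ball of radius $R_{y}<\delta_{\tau}|y|$ (Lemma \ref{lemub}): the $\#G\xi$ translated bumps then have pairwise disjoint supports, so every cross term, quadratic and nonlocal alike, vanishes identically and $J_{A,V}(t\theta)=(\#G\xi)\,J_{A,V}(t(\omega_{\infty}^{R_{y}})_{y})$ exactly. The price is the truncation error, which by Lemma \ref{lemassymp} is $O(e^{-2\sqrt{\mu}(1-\varepsilon)R_{y}})$ and is absorbed because $R_{y}$ and $\varepsilon$ are chosen so that $2\sqrt{\mu}(1-\varepsilon)R_{y}>\kappa|y|$. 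Either repair works; the truncation is cleaner and is what you should adopt unless you add the sign observation above.
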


Note that, if $N=3,$ $p=2$ and $\alpha=1$, then $\Lambda_{\alpha,p}=(2,9/4]$
and assumption $(H_{1})$ holds.

Theorem \ref{mainthm1}\ may be applied, in particular, to the generalized
Choquard problem (\ref{gch}). Assumption $(H_{0})$ holds for every $\tau$ if
$G$ acts freely on $\mathbb{R}^{N}\smallsetminus\{0\}.$ This is true, for
example, if $N$ is even and $G:=\mathbb{S}^{1}$ acts on $\mathbb{R}^{N}%
\equiv\mathbb{C}^{N/2}$ by complex multiplication on each complex coordinate.
If $A$ and $V$ satisfy (\ref{Gassum}) for this $\mathbb{S}^{1}$-action then,
for each $m\in\mathbb{Z},$ taking $\tau(\zeta):=\zeta^{m}$ in Theorem
\ref{mainthm1} we obtain a sequence $(u_{m,n})$ of solutions to problem
(\ref{prob}) which satisfy%
\[
u_{m,n}(\zeta x)=\zeta^{m}u_{m,n}(x)\text{ \ \ \ for all }\zeta\in
\mathbb{S}^{1},\text{ }x\in\mathbb{R}^{N}.
\]
Thus, the restriction of $u_{m,n}$ to a.e. $\mathbb{S}^{1}$-orbit in
$\mathbb{R}^{N}\smallsetminus\{0\}$ is a map of degree $m,$ for appropriately
chosen orientations. It follows that the solutions $u_{m,n},$ $m\in\mathbb{Z}%
$, $n\in\mathbb{N}$, are all different from one another. An example of a
magnetic potential $A$ satisfying (\ref{Gassum}) for this $\mathbb{S}^{1}%
$-action is $A(z_{1},...,z_{N/2})=(\mathrm{i}z_{1},...,\mathrm{i}z_{N/2}),$
whose associated magnetic field $B=$ curl$A$ is constant.

Solutions with similar properties are given by Theorem \ref{mainthm2} if $A$
and $V$ satisfy assumption\ (\ref{Gassum}) for some finite subgroup of
$\mathbb{S}^{1}$ only. Indeed, if $G$ is the cyclic group generated by
$e^{2\pi\mathrm{i}/k}$ and $(H_{1})$ and $(H_{2})$\ hold for $\delta_{\tau
}:=\sin(\pi/k),$ $k\geq2$, we obtain solutions $u_{0},\ldots,u_{k-1}$
satisfying%
\[
u_{m}(\zeta x)=\zeta^{m}u_{m}(x)\text{ \ \ \ for all }\zeta\in G,\text{ }%
x\in\mathbb{R}^{N},\text{ }m=0,...,k-1.
\]
These solutions are clearly pairwise different.

Both theorems will be proved using variational methods. The main difficulty
is, as usual, the lack of compactness. We show that the variational functional
associated to problem (\ref{prob}) subject to the constraint (\ref{tau-inv})
satisfies the Palais-Smale condition below some energy level determined by the
$G$-action and the value of $V$ at infinity. The Palais-Smale condition holds
at all levels when every $G$-orbit in $\mathbb{R}^{N}\smallsetminus\{0\}$ is
infinite. This allows us to apply the symmetric mountain pass theorem to prove
Theorem \ref{mainthm1} once we show that the domain of the variational
functional is an infinite-dimensional Hilbert space. The proof of Theorem
\ref{mainthm2}\ is based on showing that the ground state of problem
(\ref{gch}) has the proper asymptotic decay. We prove here that it does.

The paper is organized as follows: In section \ref{secvarprob} we discuss the
variational setting. In section \ref{secps} we prove the Palais-Smale
condition and Theorem \ref{mainthm1}. Section \ref{secthm2} is devoted to the
proof of Theorem \ref{mainthm2}. The required asymptotic estimates are
established in Appendix \ref{appendix}.

\section{The variational setting}

\label{secvarprob}Set $\nabla_{A}u:=\nabla u+\mathrm{i}Au,$ and consider the
real Hilbert space%
\[
H_{A}^{1}(\mathbb{R}^{N},\mathbb{C}):=\{u\in L^{2}(\mathbb{R}^{N}%
,\mathbb{C}):\nabla_{A}u\in L^{2}(\mathbb{R}^{N},\mathbb{C}^{N})\}
\]
with the scalar product
\[
\left\langle u,v\right\rangle _{A,V}:=\operatorname{Re}\int_{\mathbb{R}^{N}%
}\left(  \nabla_{A}u\cdot\overline{\nabla_{A}v}+V(x)u\overline{v}\right)  .
\]
We write
\[
\left\Vert u\right\Vert _{A,V}:=\left(  \int_{\mathbb{R}^{N}}\left(
\left\vert \nabla_{A}u\right\vert ^{2}+V(x)\left\vert u\right\vert
^{2}\right)  \right)  ^{1/2}%
\]
for the associated norm, which is equivalent to the usual one, defined by
taking $V\equiv1$ \cite[Definition 7.20]{ll}. If $u\in H_{A}^{1}%
(\mathbb{R}^{N},\mathbb{C}),$ then $\left\vert u\right\vert \in H^{1}%
(\mathbb{R}^{N})$ and%
\begin{equation}
\left\vert \nabla\left\vert u\right\vert (x)\right\vert \leq\left\vert \nabla
u(x)+\mathrm{i}A(x)u(x)\right\vert \text{ \ \ for a.e. }x\in\mathbb{R}^{N}.
\label{di}%
\end{equation}
This is called the \emph{diamagnetic inequality} \cite[Theorem 7.21]{ll}. Set
\[
\mathbb{D}(u):=\int_{\mathbb{R}^{N}}\int_{\mathbb{R}^{N}}\frac{|u(x)|^{p}%
|u(y)|^{p}}{\left\vert x-y\right\vert ^{\alpha}}\,dxdy
\]
and $r:=2N/(2N-\alpha).$ Note that $pr\in(2,2N/(N-2)).$ The classical
Hardy-Littlewood-Sobolev inequality \cite[Theorem 4.3]{ll} yields%
\begin{equation}
\left\vert \int_{\mathbb{R}^{N}}\int_{\mathbb{R}^{N}}\frac{\phi(x)\psi
(y)}{|x-y|^{\alpha}}\,dx\,dy\right\vert \leq K\Vert\phi\Vert_{L^{r}%
(\mathbb{R}^{N})}\Vert\psi\Vert_{L^{r}(\mathbb{R}^{N})}, \label{hls}%
\end{equation}
for some positive constant $K=K(\alpha,N)$ and all $\phi,\psi\in
L^{r}(\mathbb{R}^{N}).$ In particular,%
\begin{equation}
\mathbb{D}(u)\leq K\Vert u\Vert_{L^{pr}(\mathbb{R}^{N})}^{2p} \label{Dbdd}%
\end{equation}
for every $u\in H_{A}^{1}(\mathbb{R}^{N},\mathbb{C}).$ This shows that
$\mathbb{D}$ is well-defined. Inequalities (\ref{di}) and (\ref{Dbdd}),
together with Sobolev's inequality, yield%
\begin{equation}
\mathbb{D}(u)^{1/p}\leq S_{\ast}\left\Vert u\right\Vert _{A,1}^{2}
\label{magsob}%
\end{equation}
for some positive constant $S_{\ast}=S_{\ast}(p,\alpha,N)$ and every $u\in
H_{A}^{1}(\mathbb{R}^{N},\mathbb{C}).$

The energy functional $J_{A,V}:H_{A}^{1}(\mathbb{R}^{N},\mathbb{C}%
)\rightarrow\mathbb{R}$ associated to problem (\ref{prob}), defined by
\[
J_{A,V}(u):=\frac{1}{2}\left\Vert u\right\Vert _{A,V}^{2}-\frac{1}%
{2p}\mathbb{D}(u),
\]
is of class $C^{2}.$ Its derivative is given by%
\[
J_{A,V}^{\prime}(u)v:=\left\langle u,v\right\rangle _{A,V}-\operatorname{Re}%
\int_{\mathbb{R}^{N}}\left(  \frac{1}{|x|^{\alpha}}\ast|u|^{p}\right)
|u|^{p-2}u\overline{v}.
\]
Therefore, the solutions to problem (\ref{prob}) are the critical points of
$J_{A,V}.$

The action of $G$ on $H_{A}^{1}(\mathbb{R}^{N},\mathbb{C})$ given by
$(g,u)\mapsto u_{g},$ where%
\[
(u_{g})(x):=\tau(g)u(g^{-1}x),
\]
satisfies%
\[
\left\langle u_{g},v_{g}\right\rangle _{A,V}=\left\langle u,v\right\rangle
_{A,V}\text{ \ \ and \ \ }\mathbb{D}(u_{g})=\mathbb{D}(u)
\]
for all $g\in G,$ $u,v\in H_{A}^{1}(\mathbb{R}^{N},\mathbb{C}).$ Hence,
$J_{A,V}$ is $G$-invariant. By the principle of symmetric criticality \cite{p,
w}, the critical points of the restriction of $J_{A,V}$ to the fixed point
space of the $G$-action, defined as
\[%
\begin{array}
[c]{ll}%
H_{A}^{1}(\mathbb{R}^{N},\mathbb{C})^{\tau}: & =\{u\in H_{A}^{1}%
(\mathbb{R}^{N},\mathbb{C}):u_{g}=u\}\\
& =\{u\in H_{A}^{1}(\mathbb{R}^{N},\mathbb{C}):u(gx)=\tau(g)u(x)\text{
\ }\forall x\in\mathbb{R}^{N},\text{ }g\in G\},
\end{array}
\]
are the solutions to problem (\ref{prob}) which satisfy (\ref{tau-inv}). The
nontrivial ones lie on the \emph{Nehari manifold}%
\[
\mathcal{N}_{A,V}^{\tau}:=\{u\in H_{A}^{1}(\mathbb{R}^{N},\mathbb{C})^{\tau
}:u\neq0,\text{ \ }\left\Vert u\right\Vert _{A,V}^{2}=\mathbb{D}(u)\},
\]
which is radially diffeomorphic to the unit sphere in $H_{A}^{1}%
(\mathbb{R}^{N},\mathbb{C})^{\tau},$ and%
\[
E_{A,V}^{\tau}:=\inf_{u\in\mathcal{N}_{A,V}^{\tau}}J_{A,V}(u)=\inf_{u\in
H_{A}^{1}(\mathbb{R}^{N},\mathbb{C})^{\tau}\smallsetminus\{0\}}\max_{t\geq
0}J_{A,V}(u)
\]
is the first mountain pass value of the functional $J_{A,V}:H_{A}%
^{1}(\mathbb{R}^{N},\mathbb{C})^{\tau}\rightarrow\mathbb{R}$.

\section{The Palais-Smale condition}

\label{secps}Recall that $J_{A,V}:H_{A}^{1}(\mathbb{R}^{N},\mathbb{C})^{\tau
}\rightarrow\mathbb{R}$ is said to satisfy the Palais-Smale condition
$(PS)_{c}$ at the level $c$, if every sequence $(u_{n})$ such that%
\[
u_{n}\in H_{A}^{1}(\mathbb{R}^{N},\mathbb{C})^{\tau},\text{ \ \ \ \ }%
J_{A,V}(u_{n})\rightarrow c,\text{ \ \ \ \ }\nabla J_{A,V}(u_{n}%
)\rightarrow0,
\]
contains a convergent subsequence.

For $\lambda\in(0,\infty)$ we consider the problem
\begin{equation}%
\begin{cases}
-\Delta u+\lambda u=\left(  \frac{1}{|x|^{\alpha}}\ast|u|^{p}\right)
|u|^{p-2}u,\\
u\in H^{1}(\mathbb{R}^{N}),
\end{cases}
\label{lim}%
\end{equation}
and write $J_{\lambda}:H^{1}(\mathbb{R}^{N})\rightarrow\mathbb{R}$,%
\[
J_{\lambda}(u):=\frac{1}{2}\left\Vert u\right\Vert _{\lambda}^{2}-\frac{1}%
{2p}\mathbb{D}(u),\text{\hspace{0.3in}with \ \ }\left\Vert u\right\Vert
_{\lambda}^{2}:=\int_{\mathbb{R}^{N}}\left(  \left\vert \nabla u\right\vert
^{2}+\lambda u^{2}\right)  ,
\]
for its associated energy functional, and
\[
E_{\lambda}:=\inf_{u\in H^{1}(\mathbb{R}^{N})\smallsetminus\{0\}}\max_{t\geq
0}J_{\lambda}(tu)
\]
for the first mountain pass value of $J_{\lambda}$.

We shall prove the following result.

\begin{proposition}
\label{propPS}The functional $J_{A,V}:H_{A}^{1}(\mathbb{R}^{N},\mathbb{C}%
)^{\tau}\rightarrow\mathbb{R}$ satisfies $(PS)_{c}$ at each
\[
c<(\min_{x\in\mathbb{R}^{N}\setminus\{0\}}\#Gx)E_{V_{\infty}},
\]
where $V_{\infty}:=\liminf_{\left\vert x\right\vert \rightarrow\infty}V(x).$
\end{proposition}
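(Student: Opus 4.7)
The plan is the standard concentration-compactness scheme for magnetic problems, with $\tau$-equivariance furnishing the multiplicity factor $\kappa := \min_{x \neq 0} \#Gx$ in the compactness threshold. Let $(u_n) \subset H_A^1(\mathbb{R}^N,\mathbb{C})^\tau$ be a Palais-Smale sequence at level $c$. Since $J'_{A,V}(u_n) \to 0$ in the dual norm and $p > 1$, the identity $J_{A,V}(u_n) - \frac{1}{2p} J'_{A,V}(u_n)u_n = (\frac{1}{2} - \frac{1}{2p}) \|u_n\|_{A,V}^2$ yields boundedness of $(u_n)$. Extracting a subsequence, $u_n \rightharpoonup u$ weakly in $H_A^1(\mathbb{R}^N,\mathbb{C})^\tau$ and $u_n \to u$ in $L^q_{\mathrm{loc}}$ for each $q \in [1,\frac{2N}{N-2})$; the Hardy-Littlewood-Sobolev bound (\ref{hls}) then lets me pass to the limit in the nonlocal nonlinearity against test functions in $C_c^\infty$, so that $u$ is a critical point of $J_{A,V}$.

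Set $v_n := u_n - u$. A Brezis-Lieb splitting for the nonlocal integral, $\mathbb{D}(u_n) = \mathbb{D}(u) + \mathbb{D}(v_n) + o(1)$, combined with the Hilbert splitting of $\|\cdot\|_{A,V}^2$, yields $J_{A,V}(u_n) = J_{A,V}(u) + J_{A,V}(v_n) + o(1)$ and $J'_{A,V}(v_n) \to 0$. Since $J_{A,V}(u) \geq 0$, it suffices to rule out $\liminf J_{A,V}(v_n) < \kappa E_{V_\infty}$ when $v_n \not\to 0$ strongly. The diamagnetic inequality (\ref{di}) makes $(|v_n|)$ bounded in $H^1(\mathbb{R}^N)$. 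If $|v_n| \to 0$ in $L^{pr}(\mathbb{R}^N)$, then (\ref{Dbdd}) forces $\mathbb{D}(v_n) \to 0$, and $J'_{A,V}(v_n)v_n \to 0$ gives $\|v_n\|_{A,V} \to 0$. Otherwise Lions' vanishing lemma delivers $\delta, R > 0$ and $y_n \in \mathbb{R}^N$ with $\int_{B_R(y_n)} |v_n|^2 \geq \delta$, and necessarily $|y_n| \to \infty$ because $v_n \rightharpoonup 0$.

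To identify the concentrated profile, I perform a local gauge change: choose a smooth phase $\phi_n$ with $\nabla \phi_n(x) - A(x + y_n) \to 0$ uniformly on each fixed ball $B_\rho$ (a diagonal argument based on $\phi_n(x) := A(y_n)\cdot x$ works by the continuity of $A$), and set $w_n(x) := e^{-\mathrm{i}\phi_n(x)} v_n(x + y_n)$. Then $(w_n)$ is bounded in $H^1(\mathbb{R}^N)$, $w_n \rightharpoonup w \neq 0$ weakly, and because $V(\cdot + y_n) \to V_\infty$ locally and the Choquard nonlinearity is translation-invariant, $w$ is a nontrivial critical point of $J_{V_\infty}$, so $J_{V_\infty}(w) \geq E_{V_\infty}$. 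The $\tau$-equivariance $v_n(gx) = \tau(g) v_n(x)$ then replicates the concentration at every point of the orbit $Gy_n$: pick $g_1, \dots, g_\kappa \in G$ so that the $g_j y_n$ are pairwise distinct (possible since $\#Gy_n \geq \kappa$), noting that as $G \subset O(N)$ acts by isometries on the sphere of radius $|y_n|$ and $|y_n| \to \infty$, these $\kappa$ points are separated by distances tending to infinity. Iterating the gauge-and-weak-limit extraction at each site and applying the nonlocal Brezis-Lieb splitting $\kappa$ times, I obtain $J_{A,V}(v_n) \geq \kappa J_{V_\infty}(w) + o(1) \geq \kappa E_{V_\infty}$, contradicting $c < \kappa E_{V_\infty}$. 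The principal technical obstacle is the magnetic gauge step: since $A$ is not assumed to decay at infinity, translation alone does not reduce the problem to the non-magnetic limit, and the local phases $\phi_n$ must be chosen with enough uniformity that the modified gradient and the Choquard Brezis-Lieb decomposition combine correctly to produce the non-magnetic limit functional $J_{V_\infty}$.
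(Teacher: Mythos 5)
Your overall architecture (boundedness, weak limit is a critical point, Brezis--Lieb splitting, concentration along an orbit escaping to infinity, multiplicity factor $\min_{x\neq 0}\#Gx$) is the right one, but the step you yourself flag as the principal obstacle --- the local gauge change --- does not work as written, and this is a genuine gap. With $\phi_n(x):=A(y_n)\cdot x$ you need $A(x+y_n)-A(y_n)\to 0$ uniformly on fixed balls; continuity of $A$ gives nothing of the sort, and the hypothesis here is only $A\in C^1$, with no boundedness or decay. For the paper's own motivating example $A(z_1,\dots,z_{N/2})=(\mathrm{i}z_1,\dots,\mathrm{i}z_{N/2})$ (linear, constant magnetic field) one has $A(x+y_n)-A(y_n)=A(x)$, independent of $n$ and not small; more structurally, no choice of local phases can remove a magnetic potential whose field does not vanish, so the translated profiles simply do not converge to critical points of the non-magnetic functional $J_{V_\infty}$ (and the fallback you hint at, that a residual magnetic limit problem still has energy at least $E_{V_\infty}$, presupposes that the translated potentials converge to something, which a general $C^1$ potential does not provide). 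The paper sidesteps this entirely: in Lemma \ref{lemPSto0} it truncates the PS sequence outside a large ball, applies the diamagnetic inequality (\ref{di}) to the modulus $\left\vert w_n\right\vert$, and compares with the exterior $G$-invariant non-magnetic mountain pass level $E_{\lambda,R}^{G}$ for $\lambda<V_\infty$; all the concentration and orbit-counting is then carried out for the real-valued problem in Lemma \ref{lemexterior}. To keep your direct route you would have to replace the gauge step by such a modulus/diamagnetic comparison, or impose hypotheses on $A$ that the proposition does not assume.

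A second, smaller gap is the separation claim: it is not true that distinct points of $Gy_n$ are automatically separated by distances tending to infinity just because $\left\vert y_n\right\vert\to\infty$. If $y_n$ drifts to infinity near the fixed-point set of a proper subgroup, pairs of distinct orbit points can remain at bounded distance; for instance, with $G=\mathbb{Z}/4$ acting on $\mathbb{C}^2$ via the generator $(z_1,z_2)\mapsto(\mathrm{i}z_1,-z_2)$ and $y_n=(1,n)$, the orbit has four points but $(1,n)$ and $(-1,n)$ stay at distance $2$. The paper's Lemma \ref{lemG} exists precisely to repair this: it replaces $y_n$ by a nearby $\zeta_n$ with constant isotropy $\Gamma$ whose orbit points genuinely separate, and the count $\#G\zeta_n=\left\vert G/\Gamma\right\vert\geq\min_{x\neq0}\#Gx$ then legitimizes the multiplicity factor. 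Your argument needs this lemma, or an equivalent, before the iterated Brezis--Lieb step can be applied the claimed number of times.
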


The similar statement for the local nonmagnetic problem is well known and can
be traced back to Lions' paper \cite{l3}. The local magnetic case was recently
treated in \cite{cc1}. The proof of Proposition \ref{propPS}, though it
follows a similar pattern, requires different arguments at several points
where the facts used in \cite{cc1}\ are either not known or do not carry over
easily to the nonlocal case.

We start by recalling some basic facts about group actions, see \cite{tD} for
details. Let $x\in\mathbb{R}^{N}.$ The $G$-orbit $Gx$ of $x$ is $G$%
-homeomorphic to the homogeneous space $G/G_{x}.$ Isotropy groups satisfy
$G_{gx}=gG_{x}g^{-1}.$ Hence the set of isotropy groups of $\mathbb{R}^{N}$
includes all groups in their conjugacy classes. Note also that, if
$\#Gx<\infty,$ there is only a finite number of groups conjugate to $G_{x}.$
The conjugacy class $(G_{x})$ of an isotropy group $G_{x}$ is called an
isotropy class. The set of isotropy classes of $\mathbb{R}^{N}$ is finite.
Conjugacy classes of subgroups of $G$ are partially ordered as follows:%
\[
(K_{1})\leq(K_{2})\Longleftrightarrow\text{ there exists }g\in G\text{ such
that }gK_{1}g^{-1}\subset K_{2}.
\]
We set%
\[
(\mathbb{R}^{N})^{K}:=\{x\in\mathbb{R}^{N}:gx=x\text{ for all }g\in K\}.
\]

\begin{lemma}
\label{lemG}Let $(y_{n})$ be a sequence in $\mathbb{R}^{N}.$ Then, after
passing to a subsequence, there exist a closed subgroup $\Gamma$ of $G$ and a
sequence $(\zeta_{n})$ in $\mathbb{R}^{N}$such that\newline(a) \ $\left(
\text{\emph{dist}}(Gy_{n},\zeta_{n})\right)  $ is bounded,\newline(b)
\ $G_{\zeta_{n}}=\Gamma,$\newline(c) \ if $\left\vert G/\Gamma\right\vert
<\infty$ then $\left\vert g\zeta_{n}-\tilde{g}\zeta_{n}\right\vert
\rightarrow\infty$ for all $g,\tilde{g}\in G$ with $\tilde{g}g^{-1}%
\notin\Gamma$,\newline(d) \ if $\left\vert G/\Gamma\right\vert =\infty,$ there
exists a closed subgroup $\Gamma^{\prime}$ of $G$ such that $\Gamma
\subset\Gamma^{\prime},$ $\left\vert G/\Gamma^{\prime}\right\vert =\infty$ and
$\left\vert g\zeta_{n}-\tilde{g}\zeta_{n}\right\vert \rightarrow\infty$ for
all $g,\tilde{g}\in G$ with $\tilde{g}g^{-1}\notin\Gamma^{\prime}$.
\end{lemma}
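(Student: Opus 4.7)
The approach I would take is a sequential extraction argument that combines three ingredients: (i) the finiteness of isotropy classes of the $G$-action on $\mathbb{R}^N$ (recalled just before the lemma), (ii) a directional limit along the sequence $y_n/|y_n|$, and (iii) a projection/shift onto a fixed subspace to obtain the exact isotropy required in (b).

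Step 1 (normalize the isotropy). Since only finitely many conjugacy classes of isotropy groups occur, I pass to a subsequence so that $(G_{y_n})$ lies in a single class $(H)$, choose $h_n\in G$ with $h_n G_{y_n} h_n^{-1}=H$, and replace $y_n$ by $h_n y_n$. Because $h_n y_n\in Gy_n$, the orbits are unchanged and (a) is not affected; after the replacement $G_{y_n}=H$ for every $n$.

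Step 2 (identify $\Gamma$ and $\zeta_n$, and verify (c)/(d)). Extract a further subsequence so that $|y_n|$ either converges or tends to $\infty$. In the first case take $\zeta_n:=\lim y_n$ and $\Gamma:=G_{\lim y_n}$, which immediately yields (a) and (b). In the case $|y_n|\to\infty$ extract once more so $v_n:=y_n/|y_n|\to v\in\mathbb{S}^{N-1}$, and set $\Gamma:=G_v$; by lower semicontinuity of isotropy $H\subseteq\Gamma$. To realize (b), I would rotate $y_n$ inside its orbit to a representative lying close to the fixed subspace $L:=(\mathbb{R}^N)^\Gamma$, and define $\zeta_n$ as the nearest point of $L$ to that representative; the orbit $Gv\subset L$ provides the natural target. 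The key computation
\[
|gy_n-\tilde g y_n|^2=2|y_n|^2\bigl(1-v_n\cdot (g^{-1}\tilde g)v_n\bigr)
\]
combined with $v_n\to v$ shows that $|gy_n-\tilde g y_n|\to\infty$ exactly when $(g^{-1}\tilde g)v\neq v$, i.e., $\tilde g g^{-1}\notin G_v=\Gamma$. This yields (c) at once when $|G/\Gamma|<\infty$. If instead $|G/\Gamma|=\infty$, I take $\Gamma':=\Gamma$ (the same identity supplies the divergence for cosets outside $\Gamma'$); should Step 2 force a smaller $\Gamma$ below, $\Gamma'$ is chosen as $G_v$, which still has infinite index by assumption.

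The main obstacle I expect lies in Step 2: producing $\zeta_n$ with \emph{exact} isotropy $\Gamma$ while keeping $\mathrm{dist}(Gy_n,\zeta_n)$ bounded. The naive orthogonal projection of $y_n$ onto $L$ can drift by $\sim|y_n|\cdot\sphericalangle(v_n,v)$, which need not be bounded even though the angle tends to $0$. The fix is to exploit the freedom to select an orbit representative $g_n y_n$ minimizing the distance to $L$ before projecting, and to use finiteness of isotropy classes to guarantee that the resulting $\zeta_n$ realizes $G_{\zeta_n}=\Gamma$ after finitely many refinements; if such a refinement were to fail, the excess symmetry is absorbed into a larger closed subgroup $\Gamma'\supsetneq\Gamma$ of infinite index, placing us into case (d). The iteration terminates because the chain of candidate subgroups must lie among the finitely many isotropy classes.
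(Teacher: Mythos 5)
Your plan diverges from the paper's and, as written, has two genuine gaps. First, in the bounded case you set $\zeta_n:=\lim y_n$ and $\Gamma:=G_{\lim y_n}$ and claim (a) and (b) follow "immediately" — but (c) then fails outright: $\zeta_n$ is a constant sequence, so $|g\zeta_n-\tilde g\zeta_n|$ is a fixed finite number and cannot tend to infinity, while $\Gamma=G_{\lim y_n}$ is in general a proper finite-index subgroup, so (c) is not vacuous. (Take $G=\{\pm\mathrm{id}\}$ and $y_n\equiv y\neq 0$: your choice gives $\Gamma=\{e\}$ and (c) would require $2|y|\to\infty$.) The paper avoids this by choosing $\Gamma$ as a \emph{maximal} isotropy class among those whose fixed-point sets $(\mathbb{R}^N)^{gG_xg^{-1}}$ stay within bounded distance of $y_n$; for a bounded sequence this forces $\Gamma=G$ and makes (c) vacuous.

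Second, and more seriously, in the unbounded case the identification $\Gamma:=G_v$ with $v=\lim y_n/|y_n|$ is incompatible with (a) and (b) holding simultaneously. You correctly flag the drift $\sim|y_n|\,\sphericalangle(v_n,v)$, but the proposed fix (minimizing over orbit representatives before projecting) cannot work: for $G=SO(2)$ rotating the first two coordinates of $\mathbb{R}^3$ and $y_n=(\sqrt{n},0,n)$ one has $v=(0,0,1)$ and $G_v=G$, yet \emph{every} point of $Gy_n$ is at distance $\sqrt{n}\to\infty$ from $(\mathbb{R}^3)^{G_v}$, so no admissible $\zeta_n$ with isotropy $G_v$ exists. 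The correct choice there is $\Gamma=\{e\}$ with $\zeta_n=y_n$, i.e.\ the true $\Gamma$ is \emph{smaller} than $G_v$, so your fallback of absorbing "excess symmetry" into a larger group points in the wrong direction. The paper's mechanism is structurally different: it splits according to whether $\mathrm{dist}(y_n,V)$ is bounded, where $V$ is the subspace of points with finite orbit. In the bounded case $\Gamma$ is a maximal isotropy class realizable at bounded distance, and (c) is proved by contradiction, decomposing $\zeta_n$ against the fixed space of the group generated by $\Gamma$ and the offending element; in the unbounded case $\Gamma=G_{y_n}$ itself (necessarily of infinite index), and a directional limit is applied only to the component of $\zeta_n$ orthogonal to $V$ in order to manufacture the auxiliary group $\Gamma'$ of part (d). Your identity $|gy_n-\tilde g y_n|^2=2|y_n|^2\bigl(1-v_n\cdot(g^{-1}\tilde g)v_n\bigr)$ is correct and does give divergence when $g^{-1}\tilde g\notin G_v$, but it cannot substitute for the boundedness dichotomy that actually selects $\Gamma$, and the converse implication you assert ("exactly when") is false since $1-v_n\cdot(g^{-1}\tilde g)v_n$ may vanish more slowly than $|y_n|^{-2}$.
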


\begin{proof}
Set $V:=\{x\in\mathbb{R}^{N}:\left\vert G/G_{x}\right\vert <\infty\}.$ Note
that $V$ is a $G$-invariant linear subspace of $\mathbb{R}^{N}.$ We consider
two cases.\newline\emph{Case 1.} The sequence $\left(  \text{dist}%
(y_{n},V)\right)  $ is bounded.\newline Let $\mathfrak{F}$ be the set of
isotropy classes $(G_{x})$ such that $x\in V$ and, for some $g\in G,$
$($dist$(y_{n},(\mathbb{R}^{N})^{gG_{x}g^{-1}}))$ contains a bounded
subsequence. We claim that $\mathfrak{F}\neq\emptyset.$ Indeed, if $z_{n}$ is
the orthogonal projection of $y_{n}$ onto $V$, after passing to a subsequence
we may assume that $G_{z_{n}}=K$ for all $n\in\mathbb{N}$. Since
$\ $dist$(y_{n},(\mathbb{R}^{N})^{K})=\left\vert y_{n}-z_{n}\right\vert =$
dist$(y_{n},V)$, we conclude that $(K)\in\mathfrak{F.}$ \newline We choose
$\Gamma$ and a subsequence of $(y_{n})$ - which we denote the same way - such
that $(\Gamma)$ is a maximal element of $\mathfrak{F}$ and
\[
\text{dist}(y_{n},(\mathbb{R}^{N})^{\Gamma})\leq c<\infty\qquad\forall
n\in\mathbb{N}\text{.}%
\]
Let $\zeta_{n}$ be the orthogonal projection of $y_{n}$ onto $(\mathbb{R}%
^{N})^{\Gamma}.$\newline\emph{(a)} is trivially satisfied since
\[
\text{dist}(Gy_{n},\zeta_{n})\leq\left\vert y_{n}-\zeta_{n}\right\vert
=(\text{dist}(y_{n},(\mathbb{R}^{N})^{\Gamma})<c\qquad\forall n\in\mathbb{N}.
\]
Passing to a subsequence we may assume that $G_{\zeta_{n}}=K$ for all
$n\in\mathbb{N}.$ Then,%
\[
\text{dist}(y_{n},(\mathbb{R}^{N})^{K})=\left\vert y_{n}-\zeta_{n}\right\vert
<c\qquad\forall n\in\mathbb{N}\text{.}%
\]
Therefore $(K)\in\mathfrak{F.}$ Since $\Gamma\subset G_{\zeta_{n}}$ and
$(\Gamma)$ is maximal, we conclude that $(G_{\zeta_{n}})=(\Gamma).$ It follows
that $G_{\zeta_{n}}=\Gamma$. This proves \emph{(b).}\newline Since $\left\vert
G/\Gamma\right\vert <\infty,$ in order to prove \emph{(c)} it suffices to show
that, if $g\notin\Gamma,$ then $(g\zeta_{n}-\zeta_{n})$ does not contain a
bounded subsequence. Arguing by contradiction, assume there exist $\hat
{g}\notin\Gamma$ and a bounded subsequence of $(\hat{g}\zeta_{n}-\zeta_{n})$.
Let $K$ be the subgroup of $G$ generated by $\Gamma\cup\{\hat{g}\},$
$W:=(\mathbb{R}^{N})^{K}$ and $W^{\perp}$ be the orthogonal complement of $W$
in $(\mathbb{R}^{N})^{\Gamma}.$ Write%
\[
\zeta_{n}=\zeta_{n}^{1}+\zeta_{n}^{2}\text{\qquad with \ }\zeta_{n}^{1}\in
W\text{ \ and }\zeta_{n}^{2}\in W^{\perp}.
\]
Then $\hat{g}\zeta_{n}^{2}-\zeta_{n}^{2}=\hat{g}\zeta_{n}-\zeta_{n}$. Since
$\hat{g}\notin\Gamma$, assertion \emph{(b)}$\ $implies that $\hat{g}\zeta
_{n}\neq\zeta_{n}.$ Hence $\zeta_{n}^{2}\neq0$ and, passing to a subsequence,
we have
\[
\frac{\zeta_{n}^{2}}{\left\vert \zeta_{n}^{2}\right\vert }\rightarrow\zeta.
\]
If $(\zeta_{n}^{2})$ is unbounded, a subsequence satisfies
\[
\left\vert \frac{\hat{g}\zeta_{n}^{2}}{\left\vert \zeta_{n}^{2}\right\vert
}-\frac{\zeta_{n}^{2}}{\left\vert \zeta_{n}^{2}\right\vert }\right\vert
=\frac{\left\vert \hat{g}\zeta_{n}-\zeta_{n}\right\vert }{\left\vert \zeta
_{n}^{2}\right\vert }\rightarrow0.
\]
Therefore $\zeta\in W,$ which is a contradiction. If, on the other hand,
$\left(  \zeta_{n}^{2}\right)  $ is bounded then, passing to a subsequence
such that $G_{\zeta_{n}^{1}}=K_{1}$ for all $n\in\mathbb{N},$ we conclude that%
\[
\text{dist}(y_{n},(\mathbb{R}^{N})^{K_{1}})=\left\vert y_{n}-\zeta_{n}%
^{1}\right\vert \leq\left\vert y_{n}-\zeta_{n}\right\vert +\left\vert
\zeta_{n}^{2}\right\vert \leq c^{\prime}<\infty.
\]
Hence $(K_{1})=(\Gamma),$ which is again a contradiction.\newline\emph{Case
2.} The sequence $\left(  \text{dist}(y_{n},V)\right)  $ is unbounded.\newline
Passing to a subsequence, we may assume that dist$(y_{n},V)\rightarrow\infty$
and that there exists an isotropy class $(\Gamma)$ such that $\left(
G_{y_{n}}\right)  =\left(  \Gamma\right)  $ for all $n\in\mathbb{N}.$ We
choose $\zeta_{n}\in Gy_{n}$ such that $G_{\zeta_{n}}=\Gamma.$ Then \emph{(a)}
and \emph{(b)}\ hold. Note that $\left\vert G/\Gamma\right\vert =\infty
$.\ \newline Let us prove \emph{(d)}.\emph{\ }Let $V^{\perp}$ be the
orthogonal complement of $V$ in $\mathbb{R}^{N}$ and $\xi_{n}$ be the
orthogonal projection of $\zeta_{n}$ onto $V^{\perp}$. Passing to a
subsequence, we have
\[
\frac{\xi_{n}}{\left\vert \xi_{n}\right\vert }\rightarrow\xi.
\]
Set $\Gamma^{\prime}:=G_{\xi}.$ Then $\Gamma\subset\Gamma^{\prime}$ and
$\left\vert G/\Gamma^{\prime}\right\vert =\infty.$ If $\left[  g\right]
,\left[  \tilde{g}\right]  \in G/\Gamma^{\prime}$ and $\left[  g\right]
\neq\left[  \tilde{g}\right]  $ we have that $d:=\left\vert g\xi-\tilde{g}%
\xi\right\vert >0.$ Let $n_{0}\in\mathbb{N}$ be such that $\left\vert
\frac{\xi_{n}}{\left\vert \xi_{n}\right\vert }-\xi\right\vert <\frac{d}{4}$
for $n\geq n_{0}.$ Then,%
\[
\frac{d}{2}\leq\left\vert g\xi-\tilde{g}\xi\right\vert -\left\vert \frac
{g\xi_{n}}{\left\vert \xi_{n}\right\vert }-g\xi\right\vert -\left\vert
\frac{\tilde{g}\xi_{n}}{\left\vert \xi_{n}\right\vert }-\tilde{g}%
\xi\right\vert \leq\left\vert \frac{g\xi_{n}}{\left\vert \xi_{n}\right\vert
}-\frac{\tilde{g}\xi_{n}}{\left\vert \xi_{n}\right\vert }\right\vert
\qquad\forall n\geq n_{0}.
\]
Consequently,
\[
\frac{d}{2}\text{dist}(\zeta_{n},V)=\frac{d}{2}\left\vert \xi_{n}\right\vert
\leq\left\vert g\xi_{n}-\tilde{g}\xi_{n}\right\vert \leq\left\vert g\zeta
_{n}-\tilde{g}\zeta_{n}\right\vert \qquad\forall n\geq n_{0}.
\]
Since dist$(\zeta_{n},V)\rightarrow\infty,$ assertion \emph{(d)} holds.
\end{proof}

Set $\Omega_{R}:=\{x\in\mathbb{R}^{N}:\left\vert x\right\vert >R\}.$ We write
\[
E_{\lambda,R}^{G}:=\inf_{u\in H_{0}^{1}(\Omega_{R})^{G}\smallsetminus
\{0\}}\max_{t\geq0}J_{\lambda}(tu)
\]
for the first mountain pass value of the restriction of $J_{\lambda}$ to the
subspace
\[
H_{0}^{1}(\Omega_{R})^{G}:=\{u\in H_{0}^{1}(\Omega_{R}):u(gx)=u(x)\text{
}\forall g\in G,x\in\Omega_{R}\}.
\]
As usual we identify a function in $H_{0}^{1}(\Omega_{R})$ with its extension
by $0$ in $H^{1}(\mathbb{R}^{N}).$

\begin{lemma}
\label{lemexterior}We have%
\[
\lim_{R\rightarrow\infty}E_{\lambda,R}^{G}=(\min_{x\in\mathbb{R}^{N}%
\setminus\{0\}}\#Gx)E_{\lambda}.
\]

\end{lemma}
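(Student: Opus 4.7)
\emph{Strategy.} The value $kE_\lambda$ should be attained asymptotically by concentrating $k$ widely separated copies of the autonomous ground state along an orbit of minimal size, and it cannot be beaten from below because $G$-invariance forces any concentration inside $\Omega_R$ to occur simultaneously along an entire $G$-orbit in $\mathbb{R}^{N}\smallsetminus\{0\}$, which has at least $k:=\min_{x\neq 0}\#Gx$ points. The plan is therefore to prove the lemma by matching $\limsup$ and $\liminf$ estimates.

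\emph{Upper bound.} I would first take a ground state $w\in H^{1}(\mathbb{R}^{N})$ of problem (\ref{lim}), chosen positive, radially symmetric and exponentially decaying (classical Choquard theory). Picking $x_{0}\neq 0$ with $\#Gx_{0}=k$ and enumerating the orbit $Gx_{0}=\{g_{1}x_{0},\ldots,g_{k}x_{0}\}$, for $s>0$ I would set
\[
u_{s}(x):=\sum_{i=1}^{k}w(x-sg_{i}x_{0}),
\]
which is $G$-invariant because $G$ permutes the representatives $g_{i}x_{0}$ by isometries. Using exponential decay of $w$ and the polynomial decay of the Riesz kernel $|x|^{-\alpha}$, all cross terms in the expansions of $\|u_{s}\|_{\lambda}^{2}$ and $\mathbb{D}(u_{s})$ vanish as $s\rightarrow\infty$, giving $\|u_{s}\|_{\lambda}^{2}\rightarrow k\|w\|_{\lambda}^{2}$ and $\mathbb{D}(u_{s})\rightarrow k\,\mathbb{D}(w)$. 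Inserting these into the explicit mountain-pass identity $\max_{t\geq0}J_{\lambda}(tu)=\frac{p-1}{2p}\|u\|_{\lambda}^{2p/(p-1)}\mathbb{D}(u)^{-1/(p-1)}$ and using $k^{p/(p-1)}k^{-1/(p-1)}=k$ gives $\max_{t\geq0}J_{\lambda}(tu_{s})\rightarrow kE_{\lambda}$. For any fixed $R$, once $s$ is large enough $u_{s}$ is essentially concentrated in $\Omega_{R}$, and a smooth radial cut-off that localizes outside $\{|x|\leq R\}$ produces a test function in $H_{0}^{1}(\Omega_{R})^{G}$ with arbitrarily small error. Hence $\limsup_{R\rightarrow\infty}E_{\lambda,R}^{G}\leq kE_{\lambda}$.

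\emph{Lower bound.} For the reverse inequality I would pick Nehari almost-minimizers $u_{n}\in\mathcal{N}_{\lambda}\cap H_{0}^{1}(\Omega_{R_{n}})^{G}$ with $R_{n}\rightarrow\infty$ and $J_{\lambda}(u_{n})\leq E_{\lambda,R_{n}}^{G}+1/n$. The Nehari identity $\|u_{n}\|_{\lambda}^{2}=\mathbb{D}(u_{n})$ combined with (\ref{magsob}) forces $\|u_{n}\|_{\lambda}$ to be bounded above and bounded away from $0$, so Lions' non-vanishing lemma produces, along a subsequence, $y_{n}\in\mathbb{R}^{N}$ and $\beta>0$ with $\int_{B(y_{n},1)}|u_{n}|^{2}\geq\beta$; since $u_{n}$ vanishes outside $\Omega_{R_{n}}$, necessarily $|y_{n}|\rightarrow\infty$. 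Applying Lemma \ref{lemG} to $(y_{n})$, I would obtain a closed subgroup $\Gamma\leq G$ and points $\zeta_{n}$ satisfying (a)--(d). Assertion (a) together with $G$-invariance of $|u_{n}|^{2}$ propagates the lower mass bound to every ball $B(g\zeta_{n},c)$, $g\in G$. Alternative (d) would then produce infinitely many asymptotically disjoint such balls, contradicting the uniform $L^{2}$-bound; so we land in alternative (c), with $k^{\prime}:=|G/\Gamma|<\infty$ (hence $k^{\prime}\geq k$) and $|g\zeta_{n}-\tilde{g}\zeta_{n}|\rightarrow\infty$ for different cosets.

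\emph{Energy splitting and main obstacle.} To convert the $k^{\prime}$ diverging mass concentrations into the energy bound, I would apply Ekeland's principle on the Nehari manifold of $H_{0}^{1}(\Omega_{R_{n}})^{G}$ and invoke the principle of symmetric criticality to make $(u_{n})$ a Palais--Smale sequence for $J_{\lambda}$ on $H^{1}(\mathbb{R}^{N})$. The translate $u_{n}(\cdot+\zeta_{n})$ then converges weakly to a nontrivial $w_{0}$ (mass bound plus Rellich) which, because the Dirichlet boundary $\partial\Omega_{R_{n}}$ recedes to infinity, weakly solves (\ref{lim}); hence $J_{\lambda}(w_{0})\geq E_{\lambda}$ and $\|w_{0}\|_{\lambda}^{2}=\mathbb{D}(w_{0})$. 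A Brezis--Lieb decomposition iterated along the $k^{\prime}$ translates $g\zeta_{n}$, $g\in G/\Gamma$, together with asymptotic orthogonality of their supports, yields
\[
\|u_{n}\|_{\lambda}^{2}=k^{\prime}\|w_{0}\|_{\lambda}^{2}+\|r_{n}\|_{\lambda}^{2}+o(1),\qquad \mathbb{D}(u_{n})=k^{\prime}\mathbb{D}(w_{0})+\mathbb{D}(r_{n})+o(1).
\]
Combined with the Nehari identity on $u_{n}$, this forces $\|r_{n}\|_{\lambda}^{2}=\mathbb{D}(r_{n})+o(1)$, hence $J_{\lambda}(r_{n})\geq o(1)$, so
\[
\liminf_{n}J_{\lambda}(u_{n})\geq k^{\prime}J_{\lambda}(w_{0})\geq k^{\prime}E_{\lambda}\geq kE_{\lambda}.
\]
The hardest step will be this nonlocal Brezis--Lieb splitting: one must verify that all mixed cross-integrals $\iint|u_{n}^{(g)}(x)|^{p}|u_{n}^{(\tilde{g})}(y)|^{p}|x-y|^{-\alpha}\,dx\,dy$, $g\neq\tilde{g}$, vanish in the limit despite being spread over $k^{\prime}$ disjoint but mutually distant regions. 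This relies on $|g\zeta_{n}-\tilde{g}\zeta_{n}|\rightarrow\infty$ from (c), the polynomial decay of the Riesz kernel and the uniform $L^{pr}$-control from Hardy--Littlewood--Sobolev; pairing these estimates with the $G$-symmetry (so that the same weak limit reappears at each orbit point) is the principal technical burden.
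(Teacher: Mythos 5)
Your overall architecture matches the paper's: the upper bound comes from gluing $k:=\min_{x\neq0}\#Gx$ far translates of the autonomous ground state along a minimal orbit (the paper dismisses this inequality as ``easy to see''), and the lower bound comes from a contradiction/concentration argument organized around Lemma \ref{lemG}, ruling out alternative (d) by the uniform bound and extracting $m=|G/\Gamma|\geq k$ diverging copies of a nontrivial weak limit. Two points need attention.

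First, a genuine gap in your lower bound: you assert that the weak limit $w_{0}$ of $u_{n}(\cdot+\zeta_{n})$ solves (\ref{lim}) on all of $\mathbb{R}^{N}$ ``because the Dirichlet boundary $\partial\Omega_{R_{n}}$ recedes to infinity''. It need not. Since $u_{n}\in H_{0}^{1}(\Omega_{R_{n}})$ you may only test the equation against functions supported in $\Omega_{R_{n}}-\zeta_{n}$, and the distance from $\zeta_{n}$ to the sphere $\{|x|=R_{n}\}$ can remain bounded; in that case the limiting domain is a half-space $\mathbb{H}$ and $w_{0}$ solves only the Dirichlet problem on $\mathbb{H}$. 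Your conclusion survives because $\inf_{u\in H_{0}^{1}(\mathbb{H})\smallsetminus\{0\}}\max_{t\geq0}J_{\lambda}(tu)\geq E_{\lambda}$, but the dichotomy must be stated and handled; the paper does exactly this in its Cases 1 and 2, according to whether $|\zeta_{n}|-n$ stays bounded. Relatedly, Ekeland plus symmetric criticality does not make $(u_{n})$ a Palais--Smale sequence for $J_{\lambda}$ on all of $H^{1}(\mathbb{R}^{N})$: only the projection of $\nabla J_{\lambda}(u_{n})$ onto $H_{0}^{1}(\Omega_{R_{n}})^{G}$ is small, which is precisely why the half-space alternative arises.

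Second, the step you single out as the principal technical burden --- the Brezis--Lieb splitting of the nonlocal term $\mathbb{D}$ along the $m$ translates --- is avoidable, and the paper avoids it. From the Palais--Smale structure one already has $\frac{p-1}{2p}\|u_{n}\|_{\lambda}^{2}\to c$; splitting the quadratic form $\|\cdot\|_{\lambda}^{2}$ alone (iterated Hilbert-space orthogonality of weak limits over $g_{1},\ldots,g_{m}$ with $|g_{i}\zeta_{n}-g_{j}\zeta_{n}|\to\infty$) yields $\lim_{n}\|u_{n}\|_{\lambda}^{2}\geq m\|w_{0}\|_{\lambda}^{2}$; and since $w_{0}$ is a nontrivial critical point (on $\mathbb{R}^{N}$ or on $\mathbb{H}$), one has $\frac{p-1}{2p}\|w_{0}\|_{\lambda}^{2}=J_{\lambda}(w_{0})\geq E_{\lambda}$. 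No cross-term estimate for the Riesz potential is needed. If you do insist on splitting $\mathbb{D}$, note that bounding the cross integrals by $\|u_{n}\|_{L^{p}}$ is unavailable when $p<2$ (this lemma is not restricted to the range in $(H_{1})$); you must instead use the $L^{pr}$ control coming from (\ref{hls})--(\ref{Dbdd}) and split the kernel into a near part, killed by the eventual disjointness of the (cut-off) supports, and a far part with small $L^{q}$ norm for some $q>N/\alpha$. That route works --- it is essentially Ackermann's argument --- but it is substantially more labor than the paper's.
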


\begin{proof}
It is easy to see that $E_{\lambda}\leq E_{\lambda,R}^{G}\leq(\min
_{x\in\mathbb{R}^{N}\setminus\{0\}}\#Gx)E_{\lambda}.$ Arguing by
contradiction, assume that $c:=\sup_{R\geq0}E_{\lambda,R}^{G}<(\min
_{x\in\mathbb{R}^{N}\setminus\{0\}}\#Gx)E_{\lambda}.$ Then $c\in\mathbb{R}.$
By Ekeland's variational principle \cite[Theorem 2.4]{w}, we may choose
$u_{n}\in H_{0}^{1}(\Omega_{n})^{G}$ such that
\begin{equation}
J_{\lambda}(u_{n})\rightarrow c\text{ \ \ and \ \ }\left\Vert \nabla
_{n}J_{\lambda}(u_{n})\right\Vert _{\lambda}\rightarrow0, \label{evp}%
\end{equation}
where $\nabla_{n}J_{\lambda}(u)$ denotes the orthogonal projection of $\nabla
J_{\lambda}(u)$ onto $H_{0}^{1}(\Omega_{n})^{G}.$ A standard argument yields
\[
\frac{p-1}{2p}\left\Vert u_{n}\right\Vert _{\lambda}^{2}\rightarrow c\text{
\ \ and \ \ }\frac{p-1}{2p}\mathbb{D}(u_{n})\rightarrow c.
\]
We write $B(y,s):=\{x\in\mathbb{R}^{N}:\left\vert x-y\right\vert <s\}$ and set%
\[
\eta:=\limsup_{n\rightarrow\infty}\sup_{y\in\mathbb{R}^{N}}\int_{B(y,1)}%
\left\vert u_{n}\right\vert ^{2}.
\]
Since $c>0,$ Lions' lemma \cite[Lemma 1.21]{w},\ together with inequality
(\ref{Dbdd}), yields that $\eta\neq0.$ We choose $y_{n}\in\mathbb{R}^{N}$ such
that%
\[
\int_{B(y_{n},1)}\left\vert u_{n}\right\vert ^{2}\geq\frac{\eta}{2}.
\]
Next, we replace $\left(  y_{n}\right)  $ by $\left(  \zeta_{n}\right)  $
satisfying all conditions of Lemma \ref{lemG}, and set $v_{n}(x):=u_{n}%
(x+\zeta_{n}).$ Passing to a subsequence, we may assume that $v_{n}%
\rightharpoonup v$ weakly in $H^{1}(\mathbb{R}^{N}),$ $v_{n}(x)\rightarrow
v(x)$ a.e. in $\mathbb{R}^{N},$ and $v_{n}\rightarrow v$ strongly in
$L_{loc}^{2}(\mathbb{R}^{N})$. Choosing $R>0$ such that \ dist$\left(
Gy_{n},\zeta_{n}\right)  \leq R$ for all $n\in\mathbb{N},$ we obtain%
\[
\int_{B(0,R+1)}\left\vert v_{n}\right\vert ^{2}=\int_{B(\zeta_{n}%
,R+1)}\left\vert u_{n}\right\vert ^{2}\geq\int_{B(y_{n},1)}\left\vert
u_{n}\right\vert ^{2}\geq\frac{\eta}{2}.
\]
Hence, $v\neq0$ and $\left\vert \zeta_{n}\right\vert -n\geq-(R+1)$ for $n$
large enough.\newline Note that, since $u_{n}\in H^{1}(\mathbb{R}^{N})^{G},$
we have that
\begin{equation}
u_{n}(x)=v_{n}g^{-1}(x-g\zeta_{n})\text{ \ \ for all }g\in G. \label{inv}%
\end{equation}
Let $g_{1},\ldots,g_{m}\in G$ be such that $\left\vert g_{j}\zeta_{n}%
-g_{i}\zeta_{n}\right\vert \rightarrow\infty$ if $i\neq j.$ Then,%
\[
v_{n}g_{j}^{-1}-%
{\textstyle\sum\limits_{i=j+1}^{m}}
v_{n}g_{i}^{-1}(\text{ }\cdot\text{ }-g_{i}\zeta_{n}+g_{j}\zeta_{n}%
)\rightharpoonup vg_{j}^{-1}%
\]
weakly in $H^{1}(\mathbb{R}^{N}).$ Therefore,
\begin{align*}
&  \left\Vert v_{n}g_{j}^{-1}-%
{\textstyle\sum\limits_{i=j+1}^{m}}
v_{n}g_{i}^{-1}(\text{ }\cdot\text{ }-g_{i}\zeta_{n}+g_{j}\zeta_{n}%
)\right\Vert _{\lambda}^{2}\\
&  =\left\Vert v_{n}g_{j}^{-1}-%
{\textstyle\sum\limits_{i=j}^{m}}
v_{n}g_{i}^{-1}(\text{ }\cdot\text{ }-g_{i}\zeta_{n}+g_{j}\zeta_{n}%
)\right\Vert _{\lambda}^{2}+\left\Vert vg_{j}^{-1}\right\Vert _{\lambda}%
^{2}+o(1).
\end{align*}
The change of variable $y=x-g_{j}\zeta_{n},$ together with (\ref{inv}), yields%
\[
\left\Vert u_{n}-%
{\textstyle\sum\limits_{i=j+1}^{m}}
v_{n}g_{i}^{-1}(\text{ }\cdot\text{ }-g_{i}\zeta_{n})\right\Vert _{\lambda
}^{2}=\left\Vert u_{n}-%
{\textstyle\sum\limits_{i=j}^{m}}
v_{n}g_{i}^{-1}(\text{ }\cdot\text{ }-g_{i}\zeta_{n})\right\Vert _{\lambda
}^{2}+\left\Vert v\right\Vert _{\lambda}^{2}+o(1),
\]
and iterating this equality we obtain%
\[
\left\Vert u_{n}\right\Vert _{\lambda}^{2}=\left\Vert u_{n}-%
{\textstyle\sum\limits_{i=1}^{m}}
v_{n}g_{i}^{-1}(\text{ }\cdot\text{ }-g_{i}\zeta_{n})\right\Vert _{\lambda
}^{2}+m\left\Vert v\right\Vert _{\lambda}^{2}+o(1).
\]
Multiplying by $\frac{p-1}{2p}$ and passing to the limit as $n\rightarrow
\infty$ gives%
\[
\sup_{R\geq0}E_{\lambda,R}^{G}=\lim_{n\rightarrow\infty}\frac{p-1}%
{2p}\left\Vert u_{n}\right\Vert _{\lambda}^{2}\geq\frac{p-1}{2p}m\left\Vert
v\right\Vert _{\lambda}^{2}.
\]
Since $\sup_{R\geq0}E_{\lambda,R}^{G}<\infty,$ condition \emph{(d)} in Lemma
\ref{lemG}\ implies that $\left\vert G/\Gamma\right\vert <\infty,$ where
$\Gamma=G_{\zeta_{n}}.$ Condition \emph{(c)}\ allows us to take $m=\left\vert
G/\Gamma\right\vert =\#G\zeta_{n}.$ Hence,
\[
\sup_{R\geq0}E_{\lambda,R}^{G}\geq(\min_{x\in\mathbb{R}^{N}\setminus
\{0\}}\#Gx)\frac{p-1}{2p}\left\Vert v\right\Vert _{\lambda}^{2}.
\]
To finish the proof we will show that $\frac{p-1}{2p}\left\Vert v\right\Vert
_{\lambda}^{2}\geq E_{\lambda}.$ We distinguish two cases:\newline\emph{Case
1.} If the sequence $(\left\vert \zeta_{n}\right\vert -n)$ is unbounded,
passing to a subsequence we may assume that $\left\vert \zeta_{n}\right\vert
-n\rightarrow\infty.$ Then, every compact subset of $\mathbb{R}^{N}$ is
contained in $\Omega_{n}-\zeta_{n}$ for $n$ large enough. So, by (\ref{evp}),
$v$ is a nontrivial solution to problem (\ref{lim}). Hence, $J_{\lambda
}(v)=\frac{p-1}{2p}\left\Vert v\right\Vert _{\lambda}^{2}\geq E_{\lambda}%
.$\newline\emph{Case 2.} If $(\left\vert \zeta_{n}\right\vert -n)$ is bounded
then, after passing to a subsequence, we may assume that $\left\vert \zeta
_{n}\right\vert -n\rightarrow d$ in $\mathbb{R}$ and that $\frac{\zeta_{n}%
}{\left\vert \zeta_{n}\right\vert }\rightarrow\zeta_{0}$ in $\mathbb{R}^{N}.$
Consider the half-space $\mathbb{H}:=\{x\in\mathbb{R}^{N}:(x+d\zeta_{0}%
)\cdot\zeta_{0}>0\}.$ Since every compact subset in the interior of
$\mathbb{R}^{N}\smallsetminus\mathbb{H}$ is contained in $\mathbb{R}%
^{N}\smallsetminus(\Omega_{n}-\zeta_{n})$ for large enough $n,$ we have that
$v\in H_{0}^{1}(\mathbb{H}).$ Moreover, since every compact subset of
$\mathbb{H}$ is contained in $\Omega_{n}-\zeta_{n}$ for large enough $n$, we
have that $v$ is a nontrivial solution of%
\[
\left\{
\begin{array}
[c]{l}%
-\Delta u+\lambda u=\left(  \frac{1}{|x|^{\alpha}}\ast|u|^{p}\right)
|u|^{p-2}u,\\
u\in H_{0}^{1}(\mathbb{H}).
\end{array}
\right.
\]
Hence, $J_{\lambda}(v)=\frac{p-1}{2p}\left\Vert v\right\Vert _{\lambda}%
^{2}\geq\inf_{u\in H_{0}^{1}(\mathbb{H})\smallsetminus\{0\}}\sup_{t\geq
0}J_{\lambda}(tu)\geq E_{\lambda},$ as claimed.\newline We conclude that
$\sup_{R\geq0}E_{\lambda,R}^{G}\geq(\min_{x\in\mathbb{R}^{N}\setminus
\{0\}}\#Gx)E_{\lambda},$ contradicting our assumption.
\end{proof}

\begin{lemma}
\label{lemPSto0}Let $(u_{n})$ be a sequence in $H_{A}^{1}(\mathbb{R}%
^{N},\mathbb{C})^{\tau}$ such that $u_{n}\rightharpoonup0$ weakly in
$H_{A}^{1}(\mathbb{R}^{N},\mathbb{C})$,
\[
J_{A,V}(u_{n})\rightarrow c<(\min_{x\in\mathbb{R}^{N}\setminus\{0\}}%
\#Gx)E_{V_{\infty}},\text{ \ \ and \ \ }\nabla J_{A,V}(u_{n})\rightarrow0
\]
strongly in $H_{A}^{1}(\mathbb{R}^{N},\mathbb{C}),$ where $V_{\infty}%
:=\liminf_{\left\vert x\right\vert \rightarrow\infty}V(x).$ Then a subsequence
of $(u_{n})$ converges strongly to $0$ in $H_{A}^{1}(\mathbb{R}^{N}%
,\mathbb{C}).$
\end{lemma}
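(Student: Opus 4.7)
The plan is to argue by contradiction. Suppose that no subsequence of $(u_n)$ converges strongly to $0$. The boundedness of $(u_n)$ in $H_A^1(\mathbb{R}^N,\mathbb{C})^\tau$ follows in the usual way from the combination $J_{A,V}(u_n) - \tfrac{1}{2p}J'_{A,V}(u_n)u_n$ together with $\inf V > 0$. From $J'_{A,V}(u_n)u_n = \|u_n\|_{A,V}^2 - \mathbb{D}(u_n) \to 0$ I would deduce that, along a subsequence, $\|u_n\|_{A,V}^2 \to \beta$ and $\mathbb{D}(u_n) \to \beta$ with $\beta = \tfrac{2pc}{p-1}$. If $\beta=0$ we are done, so I assume $\beta,c > 0$.

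Next I would establish non-vanishing. Since $|u_n|\in H^1(\mathbb{R}^N)$ is bounded (diamagnetic inequality (\ref{di})), if $\sup_{y\in\mathbb{R}^N}\int_{B(y,1)}|u_n|^2\to 0$ then Lions' lemma yields $|u_n|\to 0$ in $L^q(\mathbb{R}^N)$ for every $q\in(2,\tfrac{2N}{N-2})$; applied to $q=pr$ the Hardy--Littlewood--Sobolev bound (\ref{Dbdd}) forces $\mathbb{D}(u_n)\to 0$, a contradiction. Hence there are $\eta>0$ and $y_n\in\mathbb{R}^N$ with $\int_{B(y_n,1)}|u_n|^2\ge\eta$. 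Since $u_n\rightharpoonup 0$ in $H_A^1$, local Rellich compactness gives $|u_n|\to 0$ in $L^2_{\mathrm{loc}}$, so $|y_n|\to\infty$. Now I apply Lemma \ref{lemG} to $(y_n)$, obtaining $\zeta_n$ and a closed subgroup $\Gamma\subset G$ satisfying (a)--(d). Because $|\tau(g)|=1$, the function $|u_n|$ is $G$-invariant, so by (a) one has $\int_{B(g\zeta_n,R+1)}|u_n|^2\ge\eta$ for every $g\in G$, where $R$ is a uniform bound for $\mathrm{dist}(Gy_n,\zeta_n)$.

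I then split into the two cases given by Lemma \ref{lemG}. In case (d), $|G/\Gamma'|=\infty$, so I can choose arbitrarily many coset representatives $g_1,\dots,g_m$ with $|g_i\zeta_n - g_j\zeta_n|\to\infty$. The balls $B(g_i\zeta_n,R+1)$ are eventually pairwise disjoint, yielding $\|u_n\|_{L^2}^2 \ge m\eta$; since $\inf V>0$ makes $\|u_n\|_{L^2}$ bounded, letting $m\to\infty$ is a contradiction. In case (c), I set $m:=\#G\zeta_n\ge \min_{x\ne 0}\#Gx$ and introduce smooth cutoffs $\phi_{n,i}(x):=\chi((x-g_i\zeta_n)/d_n)$ for $i=1,\dots,m$ with $d_n:=\tfrac12\min_{i\ne j}|g_i\zeta_n-g_j\zeta_n|\to\infty$. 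The supports are pairwise disjoint, and the gradient error tends to $0$ in $L^2$. Using the diamagnetic inequality and the fact that $V(x)\ge V_\infty-\varepsilon$ outside a large ball (while $|u_n|\to 0$ in $L^2_{\mathrm{loc}}$) I obtain
\[
\|u_n\|_{A,V}^2 \ge \sum_{i=1}^m \|\phi_{n,i}|u_n|\|_{V_\infty-\varepsilon}^2 + o(1).
\]
Writing $w_{n,i}(x):=\phi_{n,i}(x+g_i\zeta_n)|u_n|(x+g_i\zeta_n)$, each $w_{n,i}$ is bounded in $H^1$ and, by the $G$-invariance of $|u_n|$ and the concentration at $y_n$, a subsequence converges weakly to a nonzero function $w_i$. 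A Nehari rescaling $t_{n,i}w_{n,i}$ in the $V_\infty-\varepsilon$ problem satisfies $t_{n,i}\le 1+o(1)$ (from $\|w_{n,i}\|_{V_\infty-\varepsilon}^2\le \mathbb{D}(w_{n,i})+o(1)$), hence $\tfrac{p-1}{2p}\|w_{n,i}\|_{V_\infty-\varepsilon}^2\ge E_{V_\infty-\varepsilon}+o(1)$. Summing, sending $n\to\infty$ and then $\varepsilon\to 0$,
\[
c=\tfrac{p-1}{2p}\beta \ge m\,E_{V_\infty} \ge \bigl(\min_{x\ne 0}\#Gx\bigr)E_{V_\infty},
\]
contradicting the hypothesis on $c$.

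The hardest step is case (c): the translated functions $u_n(\cdot+g_i\zeta_n)$ are not well-behaved as elements of a fixed magnetic Sobolev space because the magnetic potential gets translated, so the reduction to the mountain-pass value $E_{V_\infty}$ of the nonmagnetic limit problem must be done through $|u_n|$ via the diamagnetic inequality together with Nehari rescaling. Controlling the cross terms produced by the cutoffs, and replacing $V$ by the constant $V_\infty-\varepsilon$ using $L^2_{\mathrm{loc}}$ smallness of $|u_n|$, are the technical points where care is required; the separation $d_n\to\infty$ from Lemma \ref{lemG}(c) is what makes all these error terms negligible.
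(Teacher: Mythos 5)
Your overall architecture differs from the paper's: the paper first performs a \emph{single} cutoff $w_n=\chi u_n$ that removes only a large ball $B(0,R_\varepsilon)$ where $u_n\to 0$ strongly in $L^{rp}_{loc}$, passes to $|w_n|\in H_0^1(\Omega_{R_\varepsilon})^G$ via the diamagnetic inequality, and compares with the exterior mountain--pass level $E_{\lambda,R}^G$; all of the orbit counting and the use of Lemma \ref{lemG} are confined to Lemma \ref{lemexterior}, where the analysis is carried out for the \emph{real, nonmagnetic} functional $J_\lambda$ and the bubble profile $v$ is identified as a nontrivial solution of a limit equation (hence automatically on the Nehari manifold, so $\frac{p-1}{2p}\|v\|_\lambda^2\ge E_\lambda$). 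You instead do the bubble extraction directly on the magnetic sequence and, to avoid identifying limit equations for translated magnetic functions, replace that identification by a Nehari rescaling of the truncated pieces $w_{n,i}=\phi_{n,i}|u_n|(\cdot+g_i\zeta_n)$.

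That substitution is where your argument has a genuine gap: the inequality $\|w_{n,i}\|_{V_\infty-\varepsilon}^2\le \mathbb{D}(w_{n,i})+o(1)$, which you assert parenthetically and which is essential for $t_{n,i}\le 1+o(1)$, is not justified. From $J_{A,V}'(u_n)u_n\to 0$ and your norm splitting you only get $\sum_i\|w_{n,i}\|^2_{V_\infty-\varepsilon}\le \mathbb{D}(u_n)+o(1)$, and because $\mathbb{D}$ is nonlocal with a \emph{positive} kernel one has $\mathbb{D}(u_n)\ge\sum_i\mathbb{D}(w_{n,i})$ with a remainder (the self--interaction of the part of $u_n$ outside all the balls $B(g_i\zeta_n,d_n)$, and its interaction with the bubbles) that has no reason to vanish: that leftover mass sits at bounded distance from the supports of the $\phi_{n,i}$ and is not controlled by $L^2_{loc}$ smallness or by the separation $d_n\to\infty$. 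Testing $J_{A,V}'(u_n)$ against $\phi_{n,i}^2u_n$ does not help either, since it bounds $\|w_{n,i}\|^2$ by $\int(K\ast|u_n|^p)|u_n|^p\phi_{n,i}^2$, which again contains the interaction of the $i$-th bubble with all of $u_n$, not just $\mathbb{D}(w_{n,i})$. So $\mathbb{D}(w_{n,i})$ could be strictly smaller than $\|w_{n,i}\|^2$ in the limit, $t_{n,i}$ could stay bounded away from $1$ from above, and the lower bound $\frac{p-1}{2p}\|w_{n,i}\|^2\ge E_{V_\infty-\varepsilon}+o(1)$ fails. The paper avoids exactly this because its cutoff is ``lossless'' for $\mathbb{D}$ (the discarded region carries no $L^{rp}$ mass, so $\mathbb{D}(u_n)-\mathbb{D}(w_n)=o(1)$ by Hardy--Littlewood--Sobolev), and because in Lemma \ref{lemexterior} the bubble is a genuine solution of a limit PDE rather than a truncation. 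To repair your route you would need either a full profile decomposition for $\mathbb{D}$ showing the leftover carries no $\mathbb{D}$-mass, or a gauge transformation making the translated magnetic bubbles converge to solutions of a limit equation --- precisely the difficulty you flag but do not resolve. (Your Case (d) and the non-vanishing step are fine.)
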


\begin{proof}
A standard argument shows that%
\[
\frac{p-1}{2p}\Vert u_{n}\Vert_{A,V}^{2}\rightarrow c\text{ \ \ and \ \ }%
\frac{p-1}{2p}\mathbb{D}(u_{n})\rightarrow c.
\]
So, if $c\leq0,$ then $u_{n}\rightarrow0$ strongly in $H_{A}^{1}%
(\mathbb{R}^{N},\mathbb{C})$ and we are done. \newline Assume that $c>0.$ Fix
$\lambda<V_{\infty}$ such that $c<(\min_{x\in\mathbb{R}^{N}\setminus
\{0\}}\#Gx)E_{\lambda},$ and $R_{0}>0$ such that $V(x)\geq\lambda$ if
$\left\vert x\right\vert \geq R_{0}.$ Let $\varepsilon\in(0,1).$ Since
$(u_{n})$ is bounded in $H_{A}^{1}(\mathbb{R}^{N},\mathbb{C})$ there exists
$R_{\varepsilon}>R_{0}$ such that $R_{\varepsilon}\rightarrow\infty$ as
$\varepsilon\rightarrow0$ and, after passing to a subsequence,%
\[
\int_{R_{\varepsilon}<\left\vert x\right\vert <R_{\varepsilon}+1}\left(
\left\vert \nabla_{A}u_{n}\right\vert ^{2}+V(x)\left\vert u_{n}\right\vert
^{2}\right)  <\varepsilon\text{ \ \ for all }n\in\mathbb{N}.
\]
We may assume that $\left\vert u_{n}\right\vert \rightarrow0$ strongly in
$L_{loc}^{rp}(\mathbb{R}^{N})$ with $r:=2N/(2N-\alpha)$. Let $\chi\in
C^{\infty}(\mathbb{R}^{N},\mathbb{R})$ be radial and such that $\chi(x)=0$ if
$\left\vert x\right\vert \leq R_{\varepsilon}$, $\chi(x)=1$ if $\left\vert
x\right\vert \geq R_{\varepsilon}+1,$ and $\chi(x)\in\lbrack0,1]$ for all
$x\in\mathbb{R}^{N}.$ Set $w_{n}:=\chi u_{n}.$ Then, using (\ref{hls}) we
obtain%
\begin{align}
\left\vert \mathbb{D}(u_{n})-\mathbb{D}(w_{n})\right\vert  &  \leq
\int_{\mathbb{R}^{N}}\int_{\mathbb{R}^{N}}\frac{\left\vert |u_{n}%
(x)|^{p}|u_{n}(y)|^{p}-|w_{n}(x)|^{p}|w_{n}(y)|^{p}\right\vert }{\left\vert
x-y\right\vert ^{\alpha}}\,dxdy\nonumber\\
&  \leq2\int_{\mathbb{R}^{N}}\int_{\mathbb{R}^{N}}\frac{|u_{n}(x)|^{p}%
\left\vert |u_{n}(y)|^{p}-|w_{n}(y)|^{p}\right\vert }{\left\vert
x-y\right\vert ^{\alpha}}\,dxdy\nonumber\\
&  \leq2K\Vert u_{n}\Vert_{L^{pr}(\mathbb{R}^{N})}^{p}\Vert|u_{n}%
(x)|^{p}-|w_{n}(y)|^{p}\Vert_{L^{r}(\mathbb{R}^{N})}\nonumber\\
&  \leq C\left\Vert u_{n}\right\Vert _{L^{rp}(B(0,R_{\varepsilon}+1))}%
^{p}=o(1). \label{difD}%
\end{align}
Here and in the following $C$ denotes some positive constant independent of
$n$, not necessarily the same one. Similarly,%
\begin{align*}
&  \left\vert \int_{\mathbb{R}^{N}}\left(  \frac{1}{\left\vert x\right\vert
^{\alpha}}\ast\left\vert u_{n}\right\vert ^{p}\right)  \left\vert
u_{n}\right\vert ^{p-2}u_{n}\bar{w}_{n}-\int_{\mathbb{R}^{N}}\left(  \frac
{1}{\left\vert x\right\vert ^{\alpha}}\ast\left\vert w_{n}\right\vert
^{p}\right)  \left\vert w_{n}\right\vert ^{p-2}w_{n}\bar{w}_{n}\right\vert \\
&  \leq C\left\Vert u_{n}\right\Vert _{L^{rp}(B(0,R_{\varepsilon}+1))}%
^{p}=o(1).
\end{align*}
Therefore,%
\begin{align*}
\left\vert J_{A,V}^{\prime}(u_{n})w_{n}-J_{A,V}^{\prime}(w_{n})w_{n}%
\right\vert  &  \leq\left\vert \left\langle u_{n}-w_{n},w_{n}\right\rangle
_{A,V}\right\vert +o(1)\\
&  \leq C\int_{R_{\varepsilon}<\left\vert x\right\vert <R_{\varepsilon}%
+1}\left(  \left\vert \nabla_{A}u_{n}\right\vert ^{2}+V(x)\left\vert
u_{n}\right\vert ^{2}\right)  +o(1).
\end{align*}
Since $J_{A,V}^{\prime}(u_{n})w_{n}\rightarrow0$ we conclude that%
\[
\left\vert \left\Vert w_{n}\right\Vert _{A,V}^{2}-\mathbb{D}(w_{n})\right\vert
<C\varepsilon\text{ \ \ for }n\text{ large enough.}%
\]
Noting that $\mathbb{D}(w_{n})\rightarrow\left(  2p/(p-1)\right)  c>0$ and
using the diamagnetic inequality (\ref{di}) we obtain that
\begin{equation}
\frac{\left\Vert \left\vert w_{n}\right\vert \right\Vert _{\lambda}^{2}%
}{\mathbb{D}(w_{n})^{1/p}}\leq\frac{\left\Vert w_{n}\right\Vert _{A,V}^{2}%
}{\mathbb{D}(w_{n})^{1/p}}\leq\mathbb{D}(w_{n})^{(p-1)/p}+C\varepsilon
\label{comp}%
\end{equation}
for $n$ large enough. Observe that%
\[
c_{\lambda,R_{\varepsilon}}^{G}\leq\max_{t\geq0}J_{A,V}(t\left\vert
w_{n}\right\vert )=\frac{p-1}{2p}\left(  \frac{\left\Vert \left\vert
w_{n}\right\vert \right\Vert _{\lambda}^{2}}{\mathbb{D}(w_{n})^{1/p}}\right)
^{p/(p-1)}.
\]
Together with (\ref{comp}), this inequality yields
\[
c_{\lambda,R_{\varepsilon}}^{G}\leq\frac{p-1}{2p}\left(  \mathbb{D}%
(w_{n})^{(p-1)/p}+C\varepsilon\right)  ^{p/(p-1)},
\]
So letting first $n\rightarrow\infty$ and then $\varepsilon\rightarrow0$ we
conclude that
\[
\lim_{R\rightarrow\infty}c_{\lambda,R}^{G}\leq c<(\min_{x\in\mathbb{R}%
^{N}\setminus\{0\}}\#Gx)E_{\lambda}.
\]
This contradicts Lemma \ref{lemexterior}. Hence, $c=0$ and $u_{n}\rightarrow0$
strongly in $H_{A}^{1}(\mathbb{R}^{N},\mathbb{C}).$
\end{proof}

\begin{lemma}
\label{lemnils} Let $(u_{n})$ be a sequence in $H_{A}^{1}(\mathbb{R}%
^{N},\mathbb{C})^{\tau}$ such that $u_{n}\rightharpoonup u$ weakly in
$H_{A}^{1}(\mathbb{R}^{N},\mathbb{C}).$ The following hold:\newline(i)
\ $\mathbb{D}^{\prime}(u_{n})v\rightarrow\mathbb{D}^{\prime}(u)v$ \ for all
$v\in H_{A}^{1}(\mathbb{R}^{N},\mathbb{C}).$\newline(ii) After passing to a
subsequence, there exists a sequence $(\widetilde{u}_{n})$ in $H_{A}%
^{1}(\mathbb{R}^{N},\mathbb{C})^{\tau}$ such that $\widetilde{u}%
_{n}\rightarrow u$ strongly in $H_{A}^{1}(\mathbb{R}^{N},\mathbb{C})$,%
\begin{align*}
\mathbb{D}(u_{n})-\mathbb{D}(u_{n}-\widetilde{u}_{n})  &  \rightarrow
\mathbb{D}(u)\text{ \ \ in }\mathbb{R},\\
\mathbb{D}^{\prime}(u_{n})-\mathbb{D}^{\prime}(u_{n}-\widetilde{u}_{n})  &
\rightarrow\mathbb{D}^{\prime}(u)\text{ \ \ in }\left[  H_{A}^{1}%
(\mathbb{R}^{N},\mathbb{C})\right]  ^{\ast}.
\end{align*}

\end{lemma}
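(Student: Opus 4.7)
For part (i), I would combine the Hardy--Littlewood--Sobolev inequality with Vitali's convergence theorem. By the diamagnetic inequality (\ref{di}), $(|u_n|)$ is bounded in $H^1(\mathbb{R}^N)$ and hence in $L^{pr}(\mathbb{R}^N)$ since $pr \in (2, 2N/(N-2))$. After passing to a subsequence, $u_n \to u$ pointwise a.e. The boundedness of $(|u_n|^p)$ in $L^r(\mathbb{R}^N)$ together with a.e.\ convergence yields $|u_n|^p \rightharpoonup |u|^p$ weakly in $L^r$, and by continuity of the Riesz potential $I_\alpha := |x|^{-\alpha}\ast\,\cdot\,$ as an operator $L^r \to L^{r'}$, we get $I_\alpha(|u_n|^p) \rightharpoonup I_\alpha(|u|^p)$ weakly in $L^{r'}$. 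For a fixed $v \in H_A^1(\mathbb{R}^N,\mathbb{C})$, set $h_n := |u_n|^{p-2} u_n \bar v$; the H\"older estimate $\int_A |h_n|^r \le \|u_n\|_{L^{pr}}^{(p-1)r}\,\|v\,\chi_A\|_{L^{pr}}^r$, combined with the absolute continuity of $|v|^{pr}\,dx$, shows $(|h_n|^r)$ is uniformly integrable and tight, so Vitali's theorem gives $h_n \to |u|^{p-2}u\bar v$ strongly in $L^r$. Weak--strong pairing concludes $\mathbb{D}'(u_n)v \to \mathbb{D}'(u)v$.

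\medskip

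\textbf{Part (ii): the Brezis--Lieb splitting for $\mathbb{D}$.} I would take $\widetilde{u}_n := \phi_n u$, where $\phi_n \in C_c^\infty(\mathbb{R}^N,[0,1])$ is a \emph{radial} cut-off with $\phi_n \equiv 1$ on $\{|x|\le n\}$ and supported in $\{|x|\le n+1\}$. Radiality guarantees $\widetilde{u}_n \in H_A^1(\mathbb{R}^N,\mathbb{C})^\tau$, and dominated convergence gives $\widetilde{u}_n \to u$ strongly in $H_A^1$. Writing $f_n := |u_n|^p$, $g_n := |u_n - \widetilde{u}_n|^p$, the symmetry of $I_\alpha$ yields
\[
\mathbb{D}(u_n) - \mathbb{D}(u_n - \widetilde{u}_n) = \int_{\mathbb{R}^N} I_\alpha(f_n - g_n)\,(f_n + g_n)\,dx.
\]
A Brezis--Lieb argument, based on the elementary inequality $\bigl||a+b|^p - |a|^p\bigr| \le \varepsilon|a|^p + C_\varepsilon |b|^p$, shows $f_n - g_n \to |u|^p$ \emph{strongly} in $L^r$ (using $u_n - \widetilde{u}_n \to 0$ a.e.), while $f_n + g_n \rightharpoonup |u|^p$ weakly in $L^r$. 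Weak--strong pairing through the HLS inequality (\ref{hls}) then gives $\mathbb{D}(u_n) - \mathbb{D}(u_n - \widetilde{u}_n) \to \mathbb{D}(u)$.

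\medskip

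\textbf{The main obstacle: dual-norm convergence of $\mathbb{D}'$.} Pointwise convergence (for each fixed $v$) of $\mathbb{D}'(u_n) - \mathbb{D}'(u_n - \widetilde{u}_n)$ to $\mathbb{D}'(u)$ follows from part (i) applied both to $(u_n)$ and to $(u_n - \widetilde{u}_n) \rightharpoonup 0$, together with the continuity of $\mathbb{D}'$ along the strongly convergent $\widetilde{u}_n$. Upgrading this to convergence in $[H_A^1(\mathbb{R}^N,\mathbb{C})]^\ast$ is the delicate step, requiring uniformity over $\{v : \|v\|_{A,V} \le 1\}$. I would split $\mathbb{D}'(u_n) - \mathbb{D}'(u_n - \widetilde{u}_n) - \mathbb{D}'(\widetilde{u}_n)$, evaluated at $v$, into a term involving the pointwise Brezis--Lieb remainder $R_n := |u_n|^{p-2}u_n - |u_n - \widetilde{u}_n|^{p-2}(u_n - \widetilde{u}_n) - |\widetilde{u}_n|^{p-2}\widetilde{u}_n$ and a term involving the Riesz-potential remainder $I_\alpha(f_n - g_n - |\widetilde{u}_n|^p)$. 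Both remainders vanish a.e., and thanks to the compact support of $\widetilde{u}_n$ they are essentially localized in $B_{n+1}(0)$; the compact embedding $H_A^1 \hookrightarrow L^{pr}(B_{n+1})$ then promotes the $v$-dependence into a uniform estimate over the unit ball by H\"older and HLS. Adding back $\mathbb{D}'(\widetilde{u}_n) \to \mathbb{D}'(u)$ in operator norm (which is the easy consequence of $\widetilde{u}_n \to u$ strongly in $H_A^1$ and the $C^1$-character of $\mathbb{D}$) completes the argument.
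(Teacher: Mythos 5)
Your overall strategy is the one the paper intends: the paper's own ``proof'' of this lemma is a two-line reference to Lemma 3.5 of Ackermann \cite{a}, whose argument is precisely the cut-off-plus-Brezis--Lieb scheme you reconstruct. Your choice $\widetilde u_n=\phi_n u$ (a radial cut-off applied to the \emph{weak limit}, which lies in $H^1_A(\mathbb{R}^N,\mathbb{C})^\tau$ because that subspace is closed and convex, hence weakly closed) is the correct one -- cutting off $u_n$ itself could not give $\widetilde u_n\to u$ strongly, since $\nabla_A u_n$ need not converge to $\nabla_A u$ in $L^2_{loc}$. Part (i) and the splitting of $\mathbb{D}$ in part (ii) are sound as sketched.

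The step I would not accept as written is your justification of the dual-norm convergence of the derivatives. A compact embedding $H^1_A\hookrightarrow L^{pr}(B_{n+1})$ cannot ``promote the $v$-dependence into a uniform estimate over the unit ball'': compactness upgrades a single weakly convergent sequence, not an estimate uniform over the (non-compact) unit ball of $v$'s, and in any case the domain $B_{n+1}$ grows with $n$, so no uniform modulus is available from it. What actually removes $v$ is the quantitative chain
\[
\Bigl|\int_{\mathbb{R}^N}\Bigl(\tfrac{1}{|x|^{\alpha}}\ast h\Bigr)w\,\bar v\Bigr|\;\leq\;K\|h\|_{L^{r}}\|w\bar v\|_{L^{r}}\;\leq\;K\|h\|_{L^{r}}\|w\|_{L^{pr/(p-1)}}\|v\|_{L^{pr}}\;\leq\;C\,\|h\|_{L^{r}}\|w\|_{L^{pr/(p-1)}}\|v\|_{A,V},
\]
using (\ref{hls}), H\"older with $\frac1r=\frac{p-1}{pr}+\frac1{pr}$, and the diamagnetic plus Sobolev inequalities. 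One then writes $\mathbb{D}'(u_n)-\mathbb{D}'(u_n-\widetilde u_n)-\mathbb{D}'(\widetilde u_n)$, evaluated at $v$, as a finite sum of terms $\int(\frac{1}{|x|^{\alpha}}\ast h_n)w_n\bar v$ and checks that in each term one factor tends to $0$ in its norm ($\|h_n\|_{L^r}\to0$ or $\|w_n\|_{L^{pr/(p-1)}}\to0$) while the other stays bounded. The cross terms -- e.g.\ $h_n=|u_n|^p-|\widetilde u_n|^p$ paired with $w_n=|\widetilde u_n|^{p-2}\widetilde u_n$ -- do \emph{not} vanish in norm and need a further splitting into a fixed large ball (where Rellich applies to $u_n$) and its complement (where the tails of the fixed functions $u$ and $\frac{1}{|x|^{\alpha}}\ast|u|^p$ are small); it is here, on a fixed ball, that local compactness legitimately enters. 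Your sketch has the right ingredients (H\"older, HLS, Brezis--Lieb remainders), but the mechanism you name for the uniformity is not the one that closes the argument, and the cross terms are where the real work -- carried out in \cite{a} -- lies.
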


\begin{proof}
The proof is completely analogous to that of Lemma 3.5 in \cite{a}. The
function $\widetilde{u}_{n}$ is the product of $u_{n}$ with a radial cut-off
function, so $\widetilde{u}_{n}$ belongs to $H_{A}^{1}(\mathbb{R}%
^{N},\mathbb{C})^{\tau}$\ if $u_{n}$ does. We omit the details.
\end{proof}

\bigskip

\noindent\textbf{Proof of Proposition \ref{propPS}.}\qquad Let $u_{n}\in
H_{A}^{1}(\mathbb{R}^{N},\mathbb{C})^{\tau}$ satisfy
\[
J_{A,V}(u_{n})\rightarrow c<(\min_{x\in\mathbb{R}^{N}\setminus\{0\}}%
\#Gx)E_{V_{\infty}},\text{ \ \ and \ \ }\nabla J_{A,V}(u_{n})\rightarrow0
\]
strongly in $H_{A}^{1}(\mathbb{R}^{N},\mathbb{C}).$ Since $(u_{n})$ is bounded
in $H_{A}^{1}(\mathbb{R}^{N},\mathbb{C})$ it contains a subsequence such that
$u_{n}\rightharpoonup u$ weakly in $H_{A}^{1}(\mathbb{R}^{N},\mathbb{C}%
)^{\tau}.$ By Lemma \ref{lemnils}, $u$ solves (\ref{prob}) and, after passing
to a subsequence, there exists a sequence $(\widetilde{u}_{n})$ in $H_{A}%
^{1}(\mathbb{R}^{N},\mathbb{C})^{\tau}$ such that $v_{n}:=u_{n}-\widetilde
{u}_{n}\rightharpoonup0$ weakly in $H_{A}^{1}(\mathbb{R}^{N},\mathbb{C})$,%
\begin{align*}
J_{A,V}(u_{n})-J_{A,V}(v_{n})  &  \rightarrow J_{A,V}(u)\text{ \ \ \ in
}\mathbb{R},\\
\nabla J_{A,V}(u_{n})-\nabla J_{A,V}(v_{n})  &  \rightarrow0\text{
\ \ \ strongly in }H_{A}^{1}(\mathbb{R}^{N},\mathbb{C}).
\end{align*}
Hence, $J_{A,V}(u)\geq0,$
\[
J_{A,V}(v_{n})\rightarrow c-J_{A,V}(u)\leq c,\text{ \ \ and \ \ \ }\nabla
J_{A,V}(v_{n})\rightarrow0
\]
strongly in $H_{A}^{1}(\mathbb{R}^{N},\mathbb{C}).$ By Lemma \ref{lemPSto0}\ a
subsequence of $(v_{n})$ converges strongly to $0$ in $H_{A}^{1}%
(\mathbb{R}^{N},\mathbb{C}).$ This implies that a subsequence of $\left(
u_{n}\right)  $ converges strongly to $u$ in $H_{A}^{1}(\mathbb{R}%
^{N},\mathbb{C}).$ \qed\noindent

\bigskip

There exists a unique isotropy class $(P)$ such that
\[
\mathbb{R}_{(P)}^{N}:=\{x\in\mathbb{R}^{N}:(G_{x})=(P)\}
\]
is open and dense in $\mathbb{R}^{N}.$ $(P)$ is called the \emph{principal
isotropy class. }Any other isotropy class satisfies $(G_{x})\geq(P)$, see e.g.
\cite[Chapter I, Theorem 5.14]{tD}.

\bigskip

\noindent\textbf{Proof of Theorem \ref{mainthm1}.}\qquad By Proposition
\ref{propPS}, if $\#Gx=\infty$ for every $x\in\mathbb{R}^{N}\smallsetminus
\{0\}$ then $J_{A,V}:H_{A}^{1}(\mathbb{R}^{N},\mathbb{C})^{\tau}%
\rightarrow\mathbb{R}$ satisfies $(PS)_{c}$ for every $c\in\mathbb{R}.$ The
group $\mathbb{S}^{1}$ acts on $H_{A}^{1}(\mathbb{R}^{N},\mathbb{C})^{\tau}$
by complex multiplication, with $0$ as its only fixed point, and $J_{A,V}$ is
$\mathbb{S}^{1}$-invariant. Standard arguments, using inequality
(\ref{magsob}), show that $J_{A,V}$ satisfies the mountain pass conditions of
the $\mathbb{S}^{1}$-symmetric mountain pass theorem, cf. \cite[Theorem
3.14]{fhr} or \cite[Theorem 1.5]{cp}. In order to conclude that $J_{A,V}$ has
an unbounded sequence of critical values we need to show that $H_{A}%
^{1}(\mathbb{R}^{N},\mathbb{C})^{\tau}$ is infinite dimensional. \newline The
quotient map $\mathbb{R}_{(P)}^{N}\rightarrow\mathbb{R}_{(P)}^{N}/G$ onto the
$G$-orbit space of $\mathbb{R}_{(P)}^{N}$ is a smooth fibre bundle with fibre
$G/P.$ Hence, the $G$-orbit of every $x\in\mathbb{R}_{(P)}^{N}$ has a tubular
neighborhood $U_{x}$ which is $G$-diffeomorphic to the product $B\times\left(
G/P\right)  $ of an open ball $B$ of dimension $N-\dim G/P$ with $G/P,$ with
the obvious $G$-action. Assumption $(H_{0})$ implies that $P\subset\ker\tau.$
Hence, the function%
\[
C_{c}^{\infty}(B,\mathbb{C})\rightarrow C_{c}^{\infty}(B\times\left(
G/P\right)  ,\mathbb{C})^{\tau}\cong C_{c}^{\infty}(U_{x},\mathbb{C})^{\tau},
\]
given by $\varphi\mapsto\tilde{\varphi}$ where $\tilde{\varphi}(z,gP):=\tau
(g)\varphi(z),$ is well defined and it is a linear isomorphism. Here the
superindex $\tau$ indicates again the subspaces of functions satisfying
(\ref{tau-inv}). Since $C_{c}^{\infty}(U_{x},\mathbb{C})^{\tau}\subset
H_{A}^{1}(\mathbb{R}^{N},\mathbb{C})^{\tau}$ and $C_{c}^{\infty}%
(B,\mathbb{C})$\ is infinite dimensional, it follows that $H_{A}%
^{1}(\mathbb{R}^{N},\mathbb{C})^{\tau}$ is infinite dimensional. \qed\noindent

\section{Proof of Theorem \ref{mainthm2}}

\label{secthm2}Throughout this section we assume that $(H_{1})$ and $(H_{2})$ hold.

It is known that the first mountain pass value $E_{\lambda}$ of the functional
$J_{\lambda}$ associated to problem (\ref{lim}) is attained at a positive
function $\omega_{\lambda}\in H^{1}(\mathbb{R}^{N}).$ This can be proved using
Lions' concentration compactness method \cite{l}. Recently, Ma and Zhao showed
that, if $(H_{1})$ holds, then every positive solution of (\ref{lim}) is
radially symmetric \cite{mz}. After translation, we may assume that
$\omega_{\lambda}$ is radially symmetric with respect to the origin. Moreover,
for every $\mu\in(0,\lambda),$%
\begin{equation}
\left.
\begin{array}
[c]{r}%
\omega_{\lambda}(x)=O(\left\vert x\right\vert ^{-\frac{N-1}{2}}e^{-\sqrt{\mu
}\left\vert x\right\vert })\\
\left\vert \nabla\omega_{\lambda}(x)\right\vert =O(\left\vert x\right\vert
^{-\frac{N-1}{2}}e^{-\sqrt{\mu}\left\vert x\right\vert })
\end{array}
\right\}  \text{ \ \ as }\left\vert x\right\vert \rightarrow\infty.
\label{asymp}%
\end{equation}
We prove this in Appendix \ref{appendix}. We write $\omega_{\infty}%
:=\omega_{V_{\infty}}$ for $V_{\infty}:=\lim_{\left\vert x\right\vert
\rightarrow\infty}V(x)$.

Let $\kappa$ be as in assumption $(H_{2}).$ Fix $\mu\in(0,V_{\infty})$ such
that $\kappa\in(0,2\delta_{\tau}\sqrt{\mu}).$ Fix $\varepsilon\in
(0,(2\delta_{\tau}\sqrt{\mu}-\kappa)/(2\delta_{\tau}\sqrt{\mu}+\kappa))$ and a
nonincreasing cut-off function $\chi\in C^{\infty}[0,\infty)$ such that
$\chi(t)=1$ if $t\leq1-\varepsilon$ and $\chi(t)=0$ if $t\geq1.$ For
$R\in(0,\infty)$ and $u\in H^{1}(\mathbb{R}^{N})$ we write%
\[
u^{R}(x):=\chi\left(  \frac{\left\vert x\right\vert }{R}\right)  u(x).
\]
In the following $C$ will denote some positive constant, not necessarily the
same one.

\begin{lemma}
\label{lemassymp}As $R\rightarrow\infty,$%
\begin{align}
\int_{\mathbb{R}^{N}}\left\vert \left\vert \nabla\omega_{\infty}\right\vert
^{2}-\left\vert \nabla\omega_{\infty}^{R}\right\vert ^{2}\right\vert  &
=O(e^{-2\sqrt{\mu}(1-\varepsilon)R}),\label{b}\\
\left\vert \mathbb{D}(\omega_{\infty})-\mathbb{D}(\omega_{\infty}%
^{R})\right\vert  &  =O(e^{-p\sqrt{\mu}(1-\varepsilon)R}). \label{c}%
\end{align}

\end{lemma}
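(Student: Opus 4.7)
The plan is to exploit the exponential decay estimates \eqref{asymp} for $\omega_{\infty}$ and $\nabla\omega_{\infty}$ together with the fact that $\chi(|x|/R)\equiv 1$ on $\{|x|\leq(1-\varepsilon)R\}$. Consequently the integrand in \eqref{b} and the functions $|\omega_{\infty}|^{p}-|\omega_{\infty}^{R}|^{p}$ appearing in \eqref{c} both vanish on $B(0,(1-\varepsilon)R)$, and everything reduces to controlling integrals on the exterior of that ball, where the exponential decay is of order $e^{-\sqrt{\mu}(1-\varepsilon)R}$.

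For \eqref{b}, I would apply the product rule to write
\[
\nabla\omega_{\infty}^{R}=\chi(|x|/R)\nabla\omega_{\infty}+R^{-1}\chi'(|x|/R)\tfrac{x}{|x|}\omega_{\infty},
\]
and expand $|\nabla\omega_{\infty}|^{2}-|\nabla\omega_{\infty}^{R}|^{2}$ into three terms: $(1-\chi^{2})|\nabla\omega_{\infty}|^{2}$, a cross term, and a term of order $R^{-2}(\chi')^{2}\omega_{\infty}^{2}$. Using \eqref{asymp} and boundedness of $\chi'$, each is pointwise dominated by $C|x|^{-(N-1)}e^{-2\sqrt{\mu}|x|}$ on the annulus $(1-\varepsilon)R\leq |x|\leq R$. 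Passing to polar coordinates the factor $r^{N-1}$ cancels $r^{-(N-1)}$, so the integral reduces to $\int_{(1-\varepsilon)R}^{\infty}e^{-2\sqrt{\mu}r}\,dr=(2\sqrt{\mu})^{-1}e^{-2\sqrt{\mu}(1-\varepsilon)R}$, which is the required bound.

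For \eqref{c}, I would use the elementary inequality $|a^{p}b^{p}-c^{p}d^{p}|\leq a^{p}|b^{p}-d^{p}|+d^{p}|a^{p}-c^{p}|$ with $a=|\omega_{\infty}(x)|$, $b=|\omega_{\infty}(y)|$, $c=|\omega_{\infty}^{R}(x)|$, $d=|\omega_{\infty}^{R}(y)|$, and then apply Hardy--Littlewood--Sobolev \eqref{hls} to each of the two resulting double integrals to obtain
\[
\left|\mathbb{D}(\omega_{\infty})-\mathbb{D}(\omega_{\infty}^{R})\right|\leq C\,\|\omega_{\infty}\|_{L^{pr}(\mathbb{R}^{N})}^{p}\,\bigl\||\omega_{\infty}|^{p}-|\omega_{\infty}^{R}|^{p}\bigr\|_{L^{r}(\mathbb{R}^{N})}.
\]
Since $|\omega_{\infty}|^{p}-|\omega_{\infty}^{R}|^{p}=|\omega_{\infty}|^{p}(1-\chi(|\cdot|/R)^{p})$ is supported in $\{|x|\geq(1-\varepsilon)R\}$ and dominated there by $|\omega_{\infty}|^{p}$, the remaining $L^{r}$-norm is at most $\bigl(\int_{|x|\geq(1-\varepsilon)R}|\omega_{\infty}|^{pr}\bigr)^{1/r}$. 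Plugging in \eqref{asymp} and integrating in polar coordinates yields an integral of the form $\int_{(1-\varepsilon)R}^{\infty}s^{N-1-pr(N-1)/2}e^{-pr\sqrt{\mu}s}\,ds=O(e^{-pr\sqrt{\mu}(1-\varepsilon)R})$; extracting the $r$-th root gives \eqref{c}.

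The estimate \eqref{b} is essentially pointwise bookkeeping; the only delicate point is \eqref{c}, where the splitting of $a^{p}b^{p}-c^{p}d^{p}$ must be arranged so that the entire $R$-decay sits in the factor to which $\|\cdot\|_{L^{r}}$ is applied, and HLS must be invoked with the correct conjugate exponent $r=2N/(2N-\alpha)$ so that the companion factor $\|\omega_{\infty}\|_{L^{pr}}^{p}$ is finite by virtue of $pr\in(2,2N/(N-2))$.
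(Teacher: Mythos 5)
Your proof is correct and follows essentially the same route as the paper: for \eqref{c} the authors likewise split $|u(x)|^{p}|u(y)|^{p}-|w(x)|^{p}|w(y)|^{p}$ so that one factor carries the full $R$-decay, apply Hardy--Littlewood--Sobolev with $r=2N/(2N-\alpha)$, and integrate the decay estimate \eqref{asymp} over $\{|x|\geq(1-\varepsilon)R\}$. For \eqref{b} the paper merely remarks that the proof "is also easy," and your product-rule expansion supplies exactly the bookkeeping that is being omitted.
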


\begin{proof}
To prove (\ref{c}) we apply (\ref{hls}) as in (\ref{difD}), and (\ref{asymp})
to obtain
\begin{align*}
\left\vert \mathbb{D}(\omega_{\infty})-\mathbb{D}(\omega_{\infty}%
^{R})\right\vert  &  \leq2K\Vert\omega_{\infty}^{p}\Vert_{L^{r}(\mathbb{R}%
^{N})}\Vert\omega_{\infty}^{p}-(\omega_{\infty}^{R})^{p}\Vert_{L^{r}%
(\mathbb{R}^{N})}\\
&  \leq C\left(  \int_{\left\vert x\right\vert \geq(1-\varepsilon)R}%
\omega_{\infty}^{pr}(x)dx\right)  ^{1/r}\\
&  \leq C\left(  \int_{(1-\varepsilon)R}^{\infty}e^{-pr\sqrt{\mu}t}dt\right)
^{1/r}=Ce^{-p\sqrt{\mu}(1-\varepsilon)R},
\end{align*}
as $R\rightarrow\infty$. The proof of (\ref{b}) is also easy.
\end{proof}

For $y\in\mathbb{R}^{N}$ set $R_{y}:=\left[  (\kappa+2\delta_{\tau}\sqrt{\mu
})/4\delta_{\tau}\sqrt{\mu}\right]  \delta_{\tau}\left\vert y\right\vert $.
Note that $R_{y}\in(0,\delta_{\tau}\left\vert y\right\vert ).$ Since
$\delta_{\tau}\in(0,1]$ we have that $\left\vert y\right\vert -R_{y}%
\rightarrow\infty$ as $\left\vert y\right\vert \rightarrow\infty.$ For $u\in
H^{1}(\mathbb{R}^{N})$ we write $u_{y}(x):=u(x-y).$

\begin{lemma}
\label{lemub}There exist $\varrho_{0},d_{0}\in(0,\infty)$ such that%
\[
J_{A,V}(t(\omega_{\infty}^{R_{y}})_{y})\leq E_{V_{\infty}}-d_{0}%
e^{-\kappa\left\vert y\right\vert }\text{ \ \ \ for all }t\geq0\text{ \ if
}\left\vert y\right\vert \geq\varrho_{0}.
\]

\end{lemma}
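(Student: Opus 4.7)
The plan is to estimate $J_{A,V}(tu)$ with $u:=(\omega_\infty^{R_y})_y$ by comparing it to the limit functional $J_{V_\infty}(t\omega_\infty)$, gaining a definite amount from hypothesis $(H_2)$ and losing only negligible amounts from the cut-off and translation. Since $\omega_\infty$ is real-valued, $u$ is real and the cross term vanishes, so
\[
|\nabla_A u|^2 + V(x)|u|^2 = |\nabla u|^2 + \bigl(|A(x)|^2 + V(x)\bigr)u^2.
\]
Because $R_y < \delta_\tau|y| \leq |y|$, for $|y|$ large enough the support of $u$ lies in $\{|x|\geq |y|-R_y\geq\varrho\}$, so $(H_2)$ applies pointwise on supp$\,u$ and yields
\[
\|u\|_{A,V}^2 \leq \|(\omega_\infty^{R_y})_y\|_{V_\infty}^2 - c_0\int_{\mathbb{R}^N} e^{-\kappa|x|}\bigl((\omega_\infty^{R_y})_y(x)\bigr)^2\,dx.
\]

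The first step is to apply translation invariance of $\|\cdot\|_{V_\infty}$ and of $\mathbb{D}$, together with Lemma \ref{lemassymp}, to get
\[
\|(\omega_\infty^{R_y})_y\|_{V_\infty}^2 = \|\omega_\infty\|_{V_\infty}^2 + O\bigl(e^{-2\sqrt{\mu}(1-\varepsilon)R_y}\bigr),\qquad \mathbb{D}(u)=\mathbb{D}(\omega_\infty)+O\bigl(e^{-p\sqrt{\mu}(1-\varepsilon)R_y}\bigr).
\]
The second step is to lower-bound the gain term: using $|x|\geq |x+y|-|y|$, a change of variables $x\mapsto x+y$ in the integral, and the positivity and pointwise convergence $\omega_\infty^{R_y}\to\omega_\infty$,
\[
\int_{\mathbb{R}^N} e^{-\kappa|x|}\bigl((\omega_\infty^{R_y})_y\bigr)^2 = \int_{\mathbb{R}^N}e^{-\kappa|x+y|}(\omega_\infty^{R_y}(x))^2\,dx \geq e^{-\kappa|y|}\int_{\mathbb{R}^N}e^{-\kappa|x|}(\omega_\infty^{R_y}(x))^2\,dx,
\]
and the last integral is bounded below by a positive constant for all large $R_y$. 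Thus the correction dominates: $c_0\int e^{-\kappa|x|}|u|^2 \geq 2d_1 e^{-\kappa|y|}$ for some $d_1>0$.

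The third step is arithmetic: the exponent $2\sqrt{\mu}(1-\varepsilon)R_y$ equals $(1-\varepsilon)(\kappa+2\delta_\tau\sqrt{\mu})|y|/2$ by definition of $R_y$, and the choice $\varepsilon<(2\delta_\tau\sqrt{\mu}-\kappa)/(2\delta_\tau\sqrt{\mu}+\kappa)$ forces $(1-\varepsilon)(\kappa+2\delta_\tau\sqrt{\mu})>2\kappa$; since $p\geq 2$, the same holds for the $\mathbb{D}$-error exponent. Hence both cut-off errors are $o(e^{-\kappa|y|})$. Plugging the resulting bounds into the Nehari identity
\[
\max_{t\geq 0} J_{A,V}(tu) = \frac{p-1}{2p}\left(\frac{\|u\|_{A,V}^2}{\mathbb{D}(u)^{1/p}}\right)^{p/(p-1)}
\]
and first-order expansion of $s\mapsto s^{p/(p-1)}$ around $\|\omega_\infty\|_{V_\infty}^2/\mathbb{D}(\omega_\infty)^{1/p}$ gives
\[
\max_{t\geq 0} J_{A,V}(tu) \leq E_{V_\infty} - d_0 e^{-\kappa|y|}
\]
for some $d_0>0$ and all $|y|\geq\varrho_0$, as required.

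The main obstacle is the bookkeeping of exponential rates: one must check that the cut-off errors from Lemma \ref{lemassymp} (controlled by $e^{-2\sqrt{\mu}(1-\varepsilon)R_y}$ and $e^{-p\sqrt{\mu}(1-\varepsilon)R_y}$) are of \emph{strictly} smaller order than the gain $e^{-\kappa|y|}$ coming from $(H_2)$. This is exactly what the calibration of $R_y$ and the range of $\varepsilon$ are designed to achieve, so the proof reduces to verifying that the quantitative inequalities chosen at the start of the section suffice.
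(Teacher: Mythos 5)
Your proof is correct and follows essentially the same route as the paper: isolate the gain $c_0e^{-\kappa|x+y|}$ from $(H_2)$ on the support of the translated cut-off, bound it below by a constant times $e^{-\kappa|y|}$, and check via the calibration of $R_y$ and $\varepsilon$ that the cut-off errors from Lemma \ref{lemassymp} decay strictly faster than $e^{-\kappa|y|}$. The only (cosmetic) difference is at the end: the paper restricts $t$ to a compact interval $[t_1,t_2]$ and compares $J_{A,V}(tu)$ with $J_{V_\infty}(t\omega_\infty)$ pointwise in $t$, whereas you eliminate $t$ via the closed-form Nehari maximum and a first-order expansion; both close the argument.
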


\begin{proof}
Let $u\in H^{1}(\mathbb{R}^{N}).$ Observe that, since $u$ is real-valued,
$\left\vert \nabla_{A}u\right\vert ^{2}=\left\vert \nabla u\right\vert
^{2}+\left\vert A\right\vert ^{2}u^{2}$. Therefore, $(H_{2})$ implies there
exist positive constants $C_{1},C_{2}$ such that
\[
C_{1}\left\Vert u\right\Vert _{V_{\infty}}^{2}\leq\left\Vert u\right\Vert
_{A,V}^{2}\leq C_{2}\left\Vert u\right\Vert _{V_{\infty}}^{2}\text{ \ \ for
all }u\in H^{1}(\mathbb{R}^{N}).
\]
Note also that%
\[
\max_{t\geq0}J_{A,V}(tu)=J_{A,V}(t_{u}u)\text{ \ \ iff \ \ }t_{u}=\left(
\frac{\left\Vert u\right\Vert _{A,V}^{2}}{\mathbb{D}(u)}\right)  ^{1/(2p-2)}.
\]
So, since $\omega_{\infty}^{R_{y}}\rightarrow\omega_{\infty}$ in
$H^{1}(\mathbb{R}^{N})$ as $\left\vert y\right\vert \rightarrow\infty,$ there
exist $0<t_{1}<t_{2}<\infty$ such that
\[
\max_{t\geq0}J_{A,V}(t(\omega_{\infty}^{R_{y}})_{y})=\max_{t_{1}\leq t\leq
t_{2}}J_{A,V}(t(\omega_{\infty}^{R_{y}})_{y})
\]
for all large enough $\left\vert y\right\vert $. \newline Let $t\in\lbrack
t_{1},t_{2}].$ Assumption $(H_{2})$ yields%
\begin{align*}
\int_{\mathbb{R}^{N}}(\left\vert A\right\vert ^{2}+V)(t\omega_{\infty}^{R_{y}%
})_{y}^{2}  &  \leq t^{2}\int_{\left\vert x\right\vert \leq R_{y}}(\left\vert
A(x+y)\right\vert ^{2}+V(x+y))(\omega_{\infty}^{R_{y}})^{2}(x)dx\\
&  \leq t^{2}\int_{\left\vert x\right\vert \leq R_{y}}(V_{\infty}%
-c_{0}e^{-\kappa\left\vert x+y\right\vert })\omega_{\infty}^{2}(x)dx\\
&  \leq\int_{\mathbb{R}^{N}}V_{\infty}\left(  t\omega_{\infty}\right)
^{2}-\left(  c_{0}t_{1}^{2}\int_{\left\vert x\right\vert \leq1}e^{-\kappa
\left\vert x\right\vert }\omega_{\infty}^{2}(x)dx\right)  e^{-\kappa\left\vert
y\right\vert }%
\end{align*}
for $\left\vert y\right\vert $ large enough. Therefore, using Lemma
\ref{lemassymp}, we get%
\begin{align*}
J_{A,V}(t(\omega_{\infty}^{R_{y}})_{y})  &  =\frac{1}{2}\left\Vert
t(\omega_{\infty}^{R_{y}})_{y}\right\Vert _{A,V}^{2}-\frac{1}{2p}%
\mathbb{D}(t(\omega_{\infty}^{R_{y}})_{y})\\
&  \leq\frac{1}{2}\left\Vert t\omega_{\infty}\right\Vert _{V_{\infty}}%
^{2}-\frac{1}{2p}\mathbb{D}(t\omega_{\infty})-Ce^{-\kappa\left\vert
y\right\vert }+O(e^{-2\sqrt{\mu}(1-\varepsilon)R_{y}})\\
&  \leq\max_{t\geq0}J_{V_{\infty}}(t\omega_{\infty})-d_{0}e^{-\kappa\left\vert
y\right\vert }\\
&  =E_{V_{\infty}}-d_{0}e^{-\kappa\left\vert y\right\vert }%
\end{align*}
for sufficiently large $\left\vert y\right\vert $, because our choices of
$\varepsilon$ and $R_{y}$ guarantee that $2\sqrt{\mu}(1-\varepsilon
)R_{y}>\kappa\left\vert y\right\vert .$
\end{proof}

\bigskip

\noindent\textbf{Proof of Theorem \ref{mainthm2}.}\qquad Choose $\xi
\in\mathbb{R}^{N}$ such that $\#G\xi=\min_{x\in\mathbb{R}^{N}\smallsetminus
\{0\}}\#Gx,$\ $\left\vert \xi\right\vert =1,\ G_{\xi}\subset\ker\tau,$ and
\[
\min\{\left\vert g\xi-h\xi\right\vert :g,h\in G,\text{ }g\xi\neq
h\xi\}=2\delta_{\tau}\text{ \ \ if }\#G\xi>1.
\]
Note that $\#G\xi<\infty$ because $\delta_{\tau}>0.$ Set $y:=\varrho_{0}\xi,$
with $\varrho_{0}$ as in Lemma \ref{lemub}, and%
\[
\theta:=%
{\textstyle\sum\limits_{gy\in Gy}}
\tau(g)(\omega_{\infty}^{R_{y}})_{gy}.
\]
Since $\omega_{\infty}$ is radially symmetric we have that $\theta
(gx)=\tau(g)\theta(x)$ for every $g\in G,$ $x\in\mathbb{R}^{N}.$ Moreover,
since $R_{y}<\delta_{\tau}\left\vert y\right\vert ,$ the functions
$(\omega_{\infty}^{R_{y}})_{gy}$ and $(\omega_{\infty}^{R_{y}})_{hy}$ have
disjoint supports if $g\xi\neq h\xi.$ So applying Lemma \ref{lemub} we obtain%
\[
J_{A,V}(t\theta)=\left(  \#G\xi\right)  J_{A,V}(t(\omega_{\infty}^{R_{y}}%
)_{y})<\left(  \#G\xi\right)  E_{V_{\infty}}%
\]
for all $t\geq0.$ This implies that $E_{A,V}^{\tau}<(\min_{x\in\mathbb{R}%
^{N}\smallsetminus\{0\}}\#Gx)E_{V_{\infty}}.$ By Proposition \ref{propPS}%
,\ $J_{A,V}$ satisfies $(PS)_{c}$ at $c=E_{A,V}^{\tau}.$ The classical
mountain pass theorem of Ambrosetti and Rabinowitz \cite{ar} yields a solution
$u$ of problem (\ref{prob}) which satisfies (\ref{tau-inv}) and $J_{A,V}%
(u)=E_{A,V}^{\tau}.$ \qed\noindent

\appendix

\section{Asymptotic decay of ground states}

\label{appendix}We shall prove (\ref{asymp}). Set $K(x):=\frac{1}{\left\vert
x\right\vert ^{\alpha}}.$ The following lemma highlights the role of the
assumption $\Lambda_{\alpha,p}\neq\emptyset$.

\begin{lemma}
\label{conv}If $(H_{1})$ holds, then every solution $u\in H^{1}(\mathbb{R}%
^{N})$ to problem \emph{(\ref{lim})} has the following properties:\newline(i)
\ $\ u\in L^{r}(\mathbb{R}^{N})$ for every $r\in\lbrack2,\infty).$\newline(ii)
\ There exist $p<p_{1}\leq p_{2}$ and $C>0$ such that%
\[
\left\vert \left(  K\ast\left\vert u\right\vert ^{p}\right)  (x)\right\vert
\leq C\left(  \left\Vert u\right\Vert _{L^{p_{1}}(\mathbb{R}^{N})}%
^{p}+\left\Vert u\right\Vert _{L^{p_{2}}(\mathbb{R}^{N})}^{p}\right)  \text{
\ \ for all }x\in\mathbb{R}^{N}.
\]
(iii) $K\ast\left\vert u\right\vert ^{p}$ is continuous on $\mathbb{R}^{N}$
and $\lim_{\left\vert x\right\vert \rightarrow\infty}\left(  K\ast\left\vert
u\right\vert ^{p}\right)  (x)=0.$\newline(iv) $\ u$ is of class $C^{2}.$
\end{lemma}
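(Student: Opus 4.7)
The plan is to establish the four assertions in the order (ii), (i), (iii), (iv), since the pointwise estimate (ii) is the tool that drives the bootstrap in (i).

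For (ii), fix $x\in\mathbb{R}^N$ and split
\[
(K\ast|u|^p)(x)=\int_{|x-y|\leq 1}\frac{|u(y)|^p}{|x-y|^\alpha}\,dy+\int_{|x-y|>1}\frac{|u(y)|^p}{|x-y|^\alpha}\,dy.
\]
Apply H\"older to each piece. For the near integral, pick $s\in(1,N/\alpha)$ so that $|\cdot|^{-\alpha}\in L^s(B(0,1))$; H\"older gives a bound of the form $C\|u\|_{L^{ps'}}^p$ with $ps'>pN/(N-\alpha)>p$. For the far integral, pick $r>N/\alpha$ so that $|\cdot|^{-\alpha}\chi_{\{|y|>1\}}\in L^r$; H\"older then yields a bound by $C\|u\|_{L^{pr'}}^p$ with $p<pr'<pN/(N-\alpha)$. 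Setting $p_1:=pr'$ and $p_2:=ps'$ produces the required inequality with $p<p_1\leq p_2$.

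For (i), I would run a bootstrap scheme based on the equation $-\Delta u+\lambda u=(K\ast|u|^p)|u|^{p-2}u$. Sobolev's embedding gives $u\in L^r$ for $r\in[2,2N/(N-2)]$, which by $(H_1)$ includes an exponent in $\Lambda_{\alpha,p}$. Starting from $u\in L^q$ with $q\in\Lambda_{\alpha,p}$, a combination of the Hardy--Littlewood--Sobolev estimate applied to $|u|^p\in L^{q/p}$ and H\"older then shows that the right-hand side $f(u):=(K\ast|u|^p)|u|^{p-2}u$ belongs to $L^m$ for an $m$ depending on $q$. The mapping property $(-\Delta+\lambda)^{-1}:L^m\to W^{2,m}\hookrightarrow L^{m^{\ast\ast}}$ then upgrades $u$ to $L^{m^{\ast\ast}}$ with $m^{\ast\ast}>q$. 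The four intervals defining $\Lambda_{\alpha,p}$ are precisely what is needed for this bootstrap: $(p,pN/(N-\alpha))$ is where HLS applies to $|u|^p$; $[2,2N/(N-2)]$ is where the iteration starts from Sobolev; and the two fractional intervals involving $(2p-1)/(N+2-\alpha)$ and $(2p-1)/(2N-\alpha)$ are exactly the range in which the Bessel-potential gain strictly increases integrability at each step. A finite number of iterations then covers every $r\in[2,\infty)$.

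For (iii), continuity of $K\ast|u|^p$ follows from standard convolution theory once (ii) is in hand: approximating $|u|^p$ by smooth compactly supported truncations and combining the local integrability of $K$ with the pointwise bound (ii) and dominated convergence gives continuity. For the decay, given $\varepsilon>0$ choose $R$ so that $\|u\|_{L^{p_i}(\{|y|>R\})}<\varepsilon$ for $i=1,2$; applying (ii) to $u\chi_{\{|y|>R\}}$ controls the outer contribution to the convolution, while $\int_{|y|\leq R}|x-y|^{-\alpha}|u(y)|^p\,dy\to 0$ as $|x|\to\infty$ because the kernel $|x-y|^{-\alpha}$ tends to $0$ uniformly on the compact region $|y|\leq R$.

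For (iv), (i) combined with Morrey/Sobolev embedding (once $r>N/2$) yields $u\in L^\infty(\mathbb{R}^N)$, while (ii) and (iii) imply $K\ast|u|^p\in L^\infty\cap C^0$. Hence $f(u)$ is continuous and bounded; classical Calder\'on--Zygmund estimates upgrade $u$ to $W^{2,q}_{\mathrm{loc}}$ for every $q<\infty$, so $u\in C^{1,\beta}_{\mathrm{loc}}$ for every $\beta\in(0,1)$; a final Schauder argument then produces $u\in C^2$. The main obstacle in the whole proof is step (i): one must verify that the exponents prescribed by $\Lambda_{\alpha,p}$ actually close the iteration, which is exactly where the restrictive shape of $\Lambda_{\alpha,p}$ (rather than just $2-\alpha/N<p<(2N-\alpha)/(N-2)$) enters the argument.
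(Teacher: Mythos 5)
Your overall route is the same as the paper's: the near/far splitting of the kernel in (ii) is exactly the paper's decomposition $K=K_{1}+K_{2}$ with $K_{1}\in L^{s}$, $K_{2}\in L^{t}$ and H\"older; the bootstrap in (i) is the same Hardy--Littlewood--Sobolev plus H\"older plus $L^{q}$-regularity iteration; and (iii), (iv) are handled as in the paper (your direct proof of the decay in (iii) is a sound substitute for the citation of Lemma 2.20 of Lieb--Loss). The one genuine defect is your ordering. You announce that (ii) ``drives the bootstrap in (i)'', yet your own proof of (i) never uses (ii) --- it uses HLS, as does the paper. More importantly, (ii) cannot usefully be proved \emph{before} (i): the near-singularity exponent is forced to satisfy $s<N/\alpha$, hence $s'>N/(N-\alpha)$ and $p_{2}=ps'>pN/(N-\alpha)\geq 2N/(N-\alpha)$ because $p\geq2$ under $(H_{1})$; when $\alpha\geq2$ this exceeds the Sobolev exponent $2N/(N-2)$, so $\|u\|_{L^{p_{2}}}$ is not known to be finite for a mere $H^{1}$ solution and your inequality in (ii) is vacuous at the stage where you establish it. This is precisely why the paper proves (i) first and explicitly invokes it (``assertion (i) implies that $|u|^{p}\in L^{r}$ for every $r\in[1,\infty)$'') before running the H\"older estimates of (ii). The repair is trivial --- swap the order --- but as written the logical structure is inverted.

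A second, smaller gap is in (i): you assert that ``a finite number of iterations covers every $r\in[2,\infty)$'' without checking that the integrability gain does not degenerate along the iteration. This is the one point where a computation is indispensable. With $1/r_{k+1}=(p-1)(N-2r_{k})/(Nr_{k})+1/t$ one finds $1/r_{k}-1/r_{k+1}=(p-1)^{k}\left(1/r_{0}-1/r_{1}\right)$, and termination rests on two facts: $1/r_{1}<1/r_{0}$, which is exactly what membership of $q$ in the interval with left endpoint $(2p-2)N/(N+2-\alpha)$ buys (and is the reason this endpoint is excluded from $\Lambda_{\alpha,p}$), and $(p-1)^{k}\geq1$, which uses $p\geq2$ from $(H_{1})$. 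You correctly identify that the shape of $\Lambda_{\alpha,p}$ is the crux, but the argument that the increments stay bounded away from zero must be made explicit. Finally, a slip in (iv): it is $W^{2,r}$ with $r>N/2$, not $L^{r}$, that embeds into $L^{\infty}$; the boundedness of $u$ comes from the $W^{2,r_{k}}$ regularity produced by the bootstrap, not from (i) alone.
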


\begin{proof}
We prove \emph{(i)} by a bootstrapping argument. Fix $q\in\Lambda_{\alpha,p}$
and set%
\[
\frac{1}{t}:=\frac{p}{q}-\frac{N-\alpha}{N}.
\]
Since $q\in\Lambda_{\alpha,p}$ we have that $q\in(p,2N/(N-2)]$ and
$1/t\in(0,1).$ Hence $K\ast\left\vert u\right\vert ^{p}\in L^{t}%
(\mathbb{R}^{N}),$ cf. \cite[Section 4.3 (9)]{ll}. Set $r_{0}:=qN/(N+2q)$ and%
\[
\frac{1}{r_{1}}:=\frac{(p-1)(N-2r_{0})}{Nr_{0}}+\frac{1}{t}=\frac{p-1}%
{q}+\frac{1}{t}.
\]
Since $q\in\Lambda_{\alpha,p}$ we have that $1/r_{1}<1/r_{0}$ and $1/r_{1}%
\in(0,1]$ so, by H\"{o}lder's inequality, $\left(  K\ast\left\vert
u\right\vert ^{p}\right)  \left\vert u\right\vert ^{p-2}u\in L^{r_{1}%
}(\mathbb{R}^{N}).$ Using $L^{q}$-regularity theory we conclude that $u\in
W^{2,r_{1}}(\mathbb{R}^{N}).$ Sobolev's inequality then implies that $u\in
L^{r}(\mathbb{R}^{N})$ for every $r\in\lbrack2,Nr_{1}/(N-2r_{1})]$ if
$2r_{1}<N,$ and $u\in L^{r}(\mathbb{R}^{N})$ for every $r\in\lbrack2,\infty)$
if $2r_{1}\geq N.$ If $2r_{k}<N$ we continue the process by setting%
\[
\frac{1}{r_{k+1}}:=\frac{(p-1)(N-2r_{k-1})}{Nr_{k-1}}+\frac{1}{t}.
\]
Note that%
\[
\frac{1}{r_{k}}-\frac{1}{r_{k+1}}=(p-1)\left[  \frac{1}{r_{k-1}}-\frac
{1}{r_{k}}\right]  =(p-1)^{k}\left[  \frac{1}{r_{0}}-\frac{1}{r_{1}}\right]
.
\]
Hence $1/r_{k}\geq2/N$ for some $k\geq1$ and arguing as above we conclude that
$u\in L^{r}(\mathbb{R}^{N})$ for every $r\in\lbrack2,\infty)$. \newline To
prove \emph{(ii)} we fix $\delta\in(0,N-\alpha)$ and set $s:=(N-\delta
)/\alpha$ and $t:=(N+\delta)/\alpha.$ Write $K=K_{1}+K_{2}$ with $K_{1}\in
L^{s}(\mathbb{R}^{N})$ and $K_{2}\in L^{t}(\mathbb{R}^{N}).$ Since $p\geq2,$
assertion \emph{(i)} implies that $\left\vert u\right\vert ^{p}\in
L^{r}(\mathbb{R}^{N})$ for every $r\in\lbrack1,\infty).$ So using H\"{o}lder's
inequality we obtain%
\begin{align*}
\left\vert \left(  K_{1}\ast\left\vert u\right\vert ^{p}\right)
(x)\right\vert  &  \leq\left\Vert K_{1}\right\Vert _{L^{s}(\mathbb{R}^{N}%
)}\left\Vert u\right\Vert _{L^{ps^{\prime}}(\mathbb{R}^{N})}^{p},\\
\left\vert \left(  K_{2}\ast\left\vert u\right\vert ^{p}\right)
(x)\right\vert  &  \leq\left\Vert K_{2}\right\Vert _{L^{t}(\mathbb{R}^{N}%
)}\left\Vert u\right\Vert _{L^{pt^{\prime}}(\mathbb{R}^{N})}^{p},
\end{align*}
which immediately yield \emph{(ii)}. Moreover, applying \cite[Lemma 2.20]{ll}
we obtain that $K_{j}\ast\left\vert u\right\vert ^{p}$ is continuous and%
\[
\lim_{\left\vert x\right\vert \rightarrow\infty}\left(  K_{j}\ast\left\vert
u\right\vert ^{p}\right)  (x)=0,\text{ \ \ \ }j=1,2.
\]
This proves \emph{(iii)}. Property \emph{(iv)} follows from assertion
\emph{(i),} Sobolev's embedding theorem and Schauder theory in the usual way,
cf. e.g. \cite[Appendix B]{st}.
\end{proof}

Our set $\Lambda_{\alpha,p}$ coincides with the set $\Lambda$ defined by Ma
and Zhao in \cite[Remark 3]{mz}, except possibly for the point
$2(p-1)N/(N+2-\alpha)$\ which we have excluded to guarantee that
$1/r_{1}<1/r_{0}$ in Lemma \ref{conv}\ above. Note however that $\Lambda
_{\alpha,p}\neq\emptyset$ iff $\Lambda\neq\emptyset$, because
$2(p-1)N/(N+2-\alpha)<2N/(N-2)$ if $p<(2N-\alpha)/(N-2).$

\begin{proposition}
\label{aa}Assume that $(H_{1})$ holds. Then every positive solution $u$ to
problem \emph{(\ref{lim})} is radially symmetric with respect to some point
and satisfies the following:\newline(i) If $p>2$ then%
\begin{equation}
u(x),\left\vert \nabla u(x)\right\vert =O(\left\vert x\right\vert
^{-\frac{N-1}{2}}e^{-\sqrt{\lambda}\left\vert x\right\vert })\text{\hspace
{0.2in}as }\left\vert x\right\vert \rightarrow\infty. \label{p>2}%
\end{equation}
(ii) If $p=2$ then, for every $\varepsilon\in(0,\lambda),$%
\begin{equation}
u(x),\left\vert \nabla u(x)\right\vert =O(\left\vert x\right\vert
^{-\frac{N-1}{2}}e^{-\sqrt{\lambda-\varepsilon}\left\vert x\right\vert
})\text{\hspace{0.2in}as }\left\vert x\right\vert \rightarrow\infty.
\label{p=2}%
\end{equation}

\end{proposition}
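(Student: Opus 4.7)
The radial symmetry of any positive solution around some point is the content of \cite[Theorem~2]{mz} under hypothesis $(H_1)$, so the task reduces to establishing the decay estimates. The idea is to view (\ref{lim}) as the linear Schr\"{o}dinger equation
\begin{equation*}
-\Delta u+\lambda u=W(x)\,u,\qquad W(x):=(K\ast u^{p})(x)\,u(x)^{p-2},
\end{equation*}
with a potential $W$ that vanishes at infinity. Indeed, Lemma~\ref{conv} gives $u\in L^{r}(\mathbb{R}^{N})$ for all $r\geq 2$, $u\in C^{2}$, and $(K\ast u^{p})(x)\to 0$ as $|x|\to\infty$; Sobolev embedding together with elliptic regularity then force $u\in L^{\infty}$ and $u(x)\to 0$ at infinity, so that $W(x)\to 0$ in both cases $p=2$ (where $W=K\ast u^{2}$) and $p>2$ (where both factors decay).

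For the rough decay rate, I would compare $u$ with a Bessel-type barrier. Fix $\mu\in(0,\lambda)$ and choose $R_{0}$ so that $W(x)<\lambda-\mu$ whenever $|x|\geq R_{0}$; then $(-\Delta+\mu)u\leq 0$ on $\{|x|>R_{0}\}$. A direct computation, reflecting the classical asymptotics of the modified Bessel kernel, shows that $\Phi_{\mu}(x):=|x|^{-(N-1)/2}e^{-\sqrt{\mu}\,|x|}$ satisfies $(-\Delta+\mu)\Phi_{\mu}\geq 0$ for $|x|$ large. Picking $C$ so that $u\leq C\Phi_{\mu}$ on some sphere $\{|x|=R_{1}\}$ with $R_{1}$ large and applying the maximum principle to $v:=u-C\Phi_{\mu}$ on the exterior domain $\{|x|>R_{1}\}$ (using $v\to 0$ at infinity) yields $u\leq C\Phi_{\mu}$ on $\{|x|\geq R_{1}\}$. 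Since $\mu\in(0,\lambda)$ is arbitrary, this already gives (\ref{p=2}).

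To upgrade to the sharp rate $\sqrt{\lambda}$ in case (i), I would bootstrap. The rough bound and the boundedness of $K\ast u^{p}$ give
\[
W(x)\leq\|K\ast u^{p}\|_{\infty}\,u(x)^{p-2}\leq C\,e^{-(p-2)\sqrt{\mu}\,|x|},
\]
so $Wu$ decays at least like $e^{-(p-1)\sqrt{\mu}\,|x|}$. Because $p>2$, one can choose $\mu$ close enough to $\lambda$ so that $(p-1)\sqrt{\mu}>\sqrt{\lambda}$; that is, $Wu$ decays strictly faster than the fundamental solution $G_{\lambda}$ of $-\Delta+\lambda$, whose asymptotics $G_{\lambda}(x)\sim c_{N}|x|^{-(N-1)/2}e^{-\sqrt{\lambda}\,|x|}$ are classical. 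Writing $u=G_{\lambda}\ast(Wu)$ and splitting the convolution into $\{|y|\leq|x|/2\}$ and $\{|y|>|x|/2\}$, the exponential gap $(p-1)\sqrt{\mu}-\sqrt{\lambda}>0$ yields the sharp estimate $u(x)=O(|x|^{-(N-1)/2}e^{-\sqrt{\lambda}\,|x|})$, proving (\ref{p>2}).

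In both cases, the matching bound on $|\nabla u|$ follows from interior elliptic regularity: on the unit ball $B_{1}(x)$, Schauder (or $L^{q}$) estimates applied to the linear equation above give $|\nabla u(x)|\leq C\bigl(\|u\|_{L^{\infty}(B_{2}(x))}+\|Wu\|_{L^{\infty}(B_{2}(x))}\bigr)$, and the pointwise decay just established transfers to $|\nabla u|$ at the same rate. The main technical obstacle is the bootstrap step in case (i): one must verify that the polynomial prefactor $|x|^{-(N-1)/2}$ is preserved through the convolution with $G_{\lambda}$, for which the exponential gap provided by $p>2$ is essential; in case (ii) no such gap exists, which is precisely why one has to settle for the loss of $\varepsilon$.
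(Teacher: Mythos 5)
Your proposal is correct and follows essentially the same route as the paper: radial symmetry is quoted from Ma--Zhao, the rough rate $\sqrt{\lambda-\varepsilon}$ is obtained by comparing $u$ with $|x|^{-(N-1)/2}e^{-\sqrt{\lambda-\varepsilon}\,|x|}$ via the maximum principle on an exterior domain (using Lemma~\ref{conv}(iii) to make the nonlocal potential small at infinity), and for $p>2$ the sharp rate follows from the exponential gap created by the factor $u^{p-2}$. The only difference is cosmetic: you spell out the Green's-function convolution step that the paper delegates to the last part of the proof of \cite[Proposition 1.4]{gnn}.
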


\begin{proof}
Let $u$ be a positive solution to problem (\ref{lim}). Ma and Zhao showed
that, if $(H_{1})$ holds, then $u$ is radially symmetric and monotone
decreasing with respect to some point \cite[Theorem 2]{mz}. Fix $\varepsilon
\in(0,\lambda).$ Lemma \ref{conv}\emph{(iii)} implies that there exists
$R_{1}>0$ such that $\left(  \left(  K\ast u^{p}\right)  u^{p-1}\right)
(x)<\varepsilon u(x)$ for every $\left\vert x\right\vert \geq R_{1}.$ It
follows that
\[
-\Delta u(x)+(\lambda-\varepsilon)u(x)<0\quad\text{for}\ \left\vert
x\right\vert \geq R_{1}.
\]
On the other hand, for $R_{2}$ large enough, the function $v(x):=\left\vert
x\right\vert ^{-\frac{N-1}{2}}e^{-\sqrt{\lambda-\varepsilon}\left\vert
x\right\vert }$ satisfies%
\[
-\Delta v(x)+(\lambda-\varepsilon)v(x)\geq0\quad\text{for}\ \left\vert
x\right\vert \geq R_{2}.
\]
Therefore, by the maximum principle,
\[
u(x)\leq\left\vert x\right\vert ^{-\frac{N-1}{2}}e^{-\sqrt{\lambda
-\varepsilon}\left\vert x\right\vert }\quad\text{for}\ \left\vert x\right\vert
\geq\max\{R_{1},R_{2}\}.
\]
This proves that $u$ satisfies (\ref{p=2}). Moreover, since $u(x)=O(e^{-\sqrt
{\lambda-\varepsilon}\left\vert x\right\vert })$ for all $\varepsilon
\in(0,\lambda),$ if $p>2$ there exists $\beta>\sqrt{\lambda}$ such that
\[
\left(  K\ast\left\vert u\right\vert ^{p}\right)  u^{p-1}(x)=O(e^{-\beta
\left\vert x\right\vert }).
\]
The same argument given in the last part of the proof of \cite[Proposition
1.4]{gnn} yields estimate (\ref{p>2}) for $u$. Using interior estimates we
obtain the decay estimates for $\left\vert \nabla u(x)\right\vert .$
\end{proof}

\end{document}